\newcommand{\unit}{\text{\textbf{1}}}
\newcommand{\N}{\mathcal{N}}
\newcommand{\Inv}{\operatorname{Inv}}
\newcommand{\can}{\mathbf{can}}
\newcommand{\M}{{\mathcal M}}
\renewcommand{\Vec}{\text{Vec}}
\newcommand{\cC}{{\mathcal C}}
\newcommand{\F}{{\mathbb F}}
\newcommand{\ot}{{\otimes}}
\newcommand{\GL}{{\operatorname{GL}}}
\newcommand{\FPdim}{{\operatorname{FPdim}}}
\newcommand{\ORep}{{\operatorname{O-Rep}}}
\newcommand{\Ker}{\mbox{\rm Ker\,}}
\newcommand{\ku}{{\Bbbk}}
\newcommand{\Z}{{\mathbb Z}}
\newcommand{\id}{\mbox{\rm id\,}}
\newcommand{\cB}{\mathcal{B}}
\newcommand{\cD}{\mathcal{D}}
\newcommand{\cM}{\mathcal{M}}
\newcommand{\cN}{\mathcal{N}}
\newcommand{\Vc}{\operatorname{Vec}}
\newcommand{\Obj}{\mbox{\rm Obj\,}}
\newcommand{\Rep}{\operatorname{Rep}}
\newcommand{\Irr}{\operatorname{Irr}}
\newcommand{\Hom}{\operatorname{Hom}}
\newcommand{\St}{\operatorname{Stab}}
\newcommand{\Fp}{\mathbb{F}_p}
\newcommand{\Pic}[1]{\operatorname{Pic}(#1)}
\newcommand{\Aut}[1]{\operatorname{Aut}_\otimes^{\operatorname{br}}(#1)}
\theoremstyle{plain}
\numberwithin{equation}{section}
\newtheorem{theorem}{Theorem}[section]
\newtheorem{lemma}[theorem]{Lemma}
\newtheorem{sublemma}[theorem]{Sublemma}
\newtheorem{corollary}[theorem]{Corollary}
\newtheorem{proposition}[theorem]{Proposition}
\theoremstyle{definition}
\newtheorem{definition}[theorem]{Definition}
\theoremstyle{remark}
\newtheorem{remark}[theorem]{Remark}
\theoremstyle{remark}
\author{Pavel Etingof}
\address{Department of Mathematics, Massachusetts Institute of Technology,
Cambridge, MA 02139, USA}
\email{etingof@math.mit.edu}
\author[C\'esar Galindo]{C\'esar Galindo}
\address{ Departamento de Matem\'aticas, Universidad de los Andes, Bogot\'a, Colombia}
\email{cn.galindo1116@uniandes.edu.co}
\begin{document}

\title{Reflection fusion categories}

\thanks{The authors are grateful to Daniel Nakano for discussions on cohomology of finite Chevalley groups. P.E. was supported by the NSF grant DMS-1502244. C.G would like to thank the hospitality of the Mathematics Department at MIT where part
of this work was carried out. C.G. was partially supported by Fondo de Investigaciones de la Facultad de Ciencias de
la Universidad de los Andes, Convocatoria 2018-2019 para la Financiaci\'on de Programas de Investigaci\'on, programa ''Simetr\'{i}a $T$ (inversi\'on temporal) en
categor\'{i}as de fusi\'{o}n y modulares''.}
\begin{abstract}
We introduce the notion of a {\it reflection fusion category}, which is a type of a $G$-crossed category generated by objects of Frobenius-Perron dimension $1$ and $\sqrt{p}$, where $p$ is an odd prime. We show that such categories correspond to orthogonal reflection groups over $\mathbb{F}_p$. This allows us to use the known classification of irreducible reflection groups over finite fields to classify irreducible reflection fusion categories. 
\end{abstract}

\subjclass[2000]{16W30, 18D10, 19D23}

\maketitle

\section{Introduction}

In this paper we introduce the notion of a {\it reflection fusion category} and give a classification of such categories which are irreducible. Namely, a reflection fusion category is a faithful G-crossed fusion category $\cB$ whose invertible objects all sit in the trivial component $\cB_e$ and form an elementary abelian $p$-group $V$ (for an odd prime $p$) with a nondegenerate braiding (quadratic form on $V$), such that $\cB$ is tensor generated by objects of Frobenius-Perron dimension 1 and $\sqrt{p}$. The last condition is the most important one, and results in $G$ being realized as an orthogonal reflection group acting on $V$, with objects of dimension $\sqrt{p}$ corresponding to reflections. More precisely, each element $g$ of the reflection group $G$ gives rise to an invertible module category $\rho(g)=\cB_g$ over $\cB_e$ (i.e., an element of the Picard group ${\rm Pic}(\cB_e)$), and this assignment defines a morphism of 3-groups $G\to \underline{\underline{\rm Pic}}(\cB_e)$. This shows that the property of a group to be generated by reflections has a natural interpretation in the theory of tensor categories; namely, the notion of a reflection fusion category is a kind of categorification (or even 2-categorification) of the notion of a reflection group, which motivates our terminology. 

We say that a reflection fusion category $\cB$ is {\it irreducible} if $\cB_e$ has no nontrivial $G$-invariant tensor subcategories. We show that irreducible reflection categories give rise to irreducible reflection groups acting on $V$. Since irreducible orthogonal reflection groups over fields of positive characteristic have been classified, we are able to classify irreducible reflection categories. Namely, we have to analyze the obstructions $O_3(\rho)\in H^3(G,V)$ and $O_4(\rho,M)\in H^4(G,\ku^\times)$ where $M\in H^2(G,V)$ that arise in the construction of the $G$-crossed extension of $\cB_e$ (cf. \cite{ENO}, Sections 7, 8). It is easy to see that $O_3(\rho)$ always vanishes, and $O_4(\rho,M)$ also usually 
vanishes since $O_4(\rho,0)$ vanishes (as shown in the Appendix) and the group $H^2(G,V)$ is usually zero. This allows us to show that for each irreducible orthogonal reflection group there is a unique reflection category (up to twisting by an element of $H^3(G,\ku^\times)$, with a small number of exceptions. 

The organization of the paper is as follows. In Section 2 we discuss preliminaries on fusion categories and $G$-crossed categories. In Section 3 we discuss preliminaries on quadratic forms and orthogonal groups over finite fields, and give the classification of irreducible orthogonal reflection groups over $\F_p$, using the known classification of irreducible reflection groups over a finite field (up to extension of scalars). In Section 4 we introduce the notion of a reflection fusion category and show that such a category gives rise to an orthogonal reflection group over a finite field. In Section 5 we classify irreducible reflection categories using the classification of irreducible orthogonal reflection groups. Finally, in the Appendix it is shown that if $\cC$ is a pointed fusion category whose simple objects form an elementary abelian $p$-group $V$ ($p>2$) then any homomorphism $\rho: G\to O(V\oplus V^*)$ gives rise to a canonical $G$-extension of $\cC$, and if $\cC$ is equipped with a nondegenerate braiding defined by a quadratic form $Q$ on $V$ then any homomorphism 
$\rho: G\to O(V,Q)$ gives rise to a canonical $G$-crossed extension of $\cC$.

\section{Preliminaries on fusion categories and $G$-crossed  fusion categories}

\subsection{Fusion categories}

In this section we recall some basic definitions and standard notions. Much of the material here can be found in \cite{DGNO} and \cite{Book-ENO}.

Let $\ku$ be an algebraically closed field of characteristic zero. By a \emph{fusion category} we mean a $\ku$-linear semisimple rigid tensor category $\cC$ with finitely many isomorphism classes of simple objects, finite dimensional spaces of morphisms and such that $\unit$, the unit object
of $\cC$, is simple. By a
\emph{fusion subcategory} of a fusion category we  mean a full tensor abelian subcategory.

For a fusion category $\cC$ we denote by $\Irr(\cC)$ the set of isomorphism classes of simple objects in $\cC$. The cardinality of $\Irr(\cC)$ is called the \emph{rank} of $\cC$.

For a fusion category $\cC$, we will denote by $\Inv(\cC)$ the group of isomorphism classes of invertible simple objects of $\cC$.  A fusion category is called \emph{pointed} if every simple object is invertible.

The Grothendieck ring of a fusion category $\cC$ will be denoted $K_0(\cC)$. There exists
a unique ring homomorphism $\FPdim : K_0(\cC)\to \mathbb{R}$ such that $\FPdim(X) > 0$ for
any $X \in \Irr(\cC)$, see \cite[Proposition 3.3.6]{Book-ENO}. The  Frobenius-Perron dimension of a fusion category $\cC$ is defined as
\[\FPdim(\cC)= \sum_{X\in \Irr(\cC)} \FPdim(X)^2.\]

A fusion category $\cB$ is called \emph{braided} if it is endowed with a natural isomorphism 
\begin{align*}
c_{X,Y} & : X \otimes Y\to  Y \otimes X, & X,Y &\in\cC,
\end{align*}
satisfying the hexagon axioms, see \cite{JS}. 

A braided 
fusion category is called \emph{Tannakian} if $\cB \cong \Rep G$ as braided fusion categories,
for some finite group $G$, where the braiding in $\Rep G$ is the usual one
 $V \otimes W\to W \otimes V, v\otimes w\mapsto w\otimes v.$

A braided fusion category $(\cB,c)$ is called \emph{non-degenerate}
if  the unique simple object $X \in \cB$ such
that $c_{Y,X}c_{X,Y} = \id_{X\ot Y}$, for all objects $Y \in \cB$ is the unit object. For \emph{spherical} braided fusion categories, non-degeneracy
is equivalent to modularity, i.e., the invertibility of the $S$-matrix, see \cite{DGNO}.

\subsection{ $G$-crossed fusion categories and homotopy theory}\label{subsec:G-crossed}

In this subsection, we will recall the definition of $G$-crossed fusion category in the sense of Turaev \cite{MR2674592,Turaev-Arxiv} and some of the results of \cite{ENO3} about the construction of $G$-crossed  fusion categories.

\subsubsection{The Picard group of a braided fusion category}

Let $\cC$ be a fusion category. A   $\cC$-\emph{module category}  is a semisimple  $\ku$-linear  abelian category $\cM$ equipped with 
\begin{enumerate}
 \item
a $\ku$-bilinear bi-exact 
bifunctor $\ot: \cC \times \cM \to \cM$;
 \item natural associativity
and unit isomorphisms 
\begin{align*}
m_{X,Y,M}: (X\otimes Y)\ot M \to X \ot
(Y\ot M),&& \lambda_M: \mathbf{
1} \ot M\to M,    
\end{align*}
such that 
\begin{equation*}\label{left-modulecat1} m_{X, Y, Z\ot M}\; m_{X\otimes Y, Z,
M}= (\id_{X}\ot m_{Y,Z, M})\;  m_{X, Y\otimes Z, M}(a_{X,
Y, Z}\ot \id_{M}),
\end{equation*}
\begin{equation*}\label{left-modulecat2} (\id_{X}\ot l_M)m_{X,{\bf
1} ,M}= \id_{X \ot M},
\end{equation*}
\end{enumerate}
 for all $X,Y,Z \in \cC, M \in \cM$. We will denote by $\cB$-Mod, the 2-category of module categories over $\cB$, see \cite{Book-ENO} for more details.

Let $\cB$ be a braided fusion category.  In \cite{ENO3} the
tensor product $\M\boxtimes_\cB\N$ of $\cB$-module categories $\M$ and $\N$ was
defined. With this tensor product, the 2-category $\cB$-Mod has a structure of monoidal 2-category, in the sense of \cite{tricategories}. A $\cB$-module category $\M$ is called \emph{invertible} if there
exists a module category $\cN$ and equivalences
\begin{equation*}
\M\boxtimes_\cB\cN \cong \cB  \quad \text{\and} \quad
\cN\boxtimes_\cB \M \cong \cB.
\end{equation*}

We will denote by $\underline{\underline{\Pic{\cB}}}$ the monoidal 2-subcategory of $\cB$-Mod, where objects are invertible $\cB$-module categories, 1-arrows are equivalences of $\cB$-module categories and 2-arrows are $\cB$-module natural isomorphisms.  The \emph{Picard group}  $\Pic{\cB}$ is the 2-truncation of $\underline{\underline{\Pic{\cB}}}$, that is,  the set of equivalence classes of invertible $\cB$-module categories with product  $\boxtimes_\cB$.

If $\cB$ is non-degenerate, the 1-truncation $\underline{\Pic{\cB}}$ of  $\underline{\underline{\Pic{\cB}}}$ is a categorical group monoidally equivalent to $\underline{\Aut{\cB}}$, the monoidal category of braided auto-equivalences of $\cB$ and arrows monoidal natural isomorphisms, \cite[Theorem 5.2]{ENO3}. In particular, for non-degenerate braided fusion categories $\Pic{\cB}$ is isomorphic to $\Aut{\cB}$, the group of equivalence classes of braided tensor autoequivalences. 

\subsubsection{$G$-crossed  fusion categories}

Let $G$ be a finite group and let $\cC$ be a fusion category. A (faithful) $G$-grading
on $\cC$ is a decomposition $\cC = \oplus_{g\in G}\cC_g$, such that
$\cC_g \otimes \cC_h \subseteq \cC_{gh}$ and $\cC_g\neq 0$ for all $g, h \in G$.

We will denote by $\underline{G}$ the discrete monoidal category with $\Obj(\underline{G})=G$ and monoidal structure defined by the multiplication of $G$. An action of $G$ on $\cC$ is a monoidal functor $\underline{G}\to \underline{\operatorname{Aut}_{\otimes}(\cC)}$, where $\underline{\operatorname{Aut}_{\otimes}(\cC)}$ is the monoidal category of monoidal autoequivalences of $\cC$ and monoidal natural isomorphisms with tensor product given by composition of monoidal functors. 

Let $T:\underline{G}\to \underline{\operatorname{Aut}_{\otimes}(\cC)}$ be an action of a group $G$ on $\cC$. Given $X, Y\in Ob(\cC)$ and $f:X\to Y$, we will denote by $g_*(X)$ and $g_*(f)$ the image of $X$ and $f$ under the functor $T(g)$.
\begin{definition}
A (faithful) $G$-crossed fusion category is a fusion category $\cC$ equipped
with the following structures:

\begin{enumerate}
    \item[(i)] an action of $G$ on $\cC$,
    \item[(ii)] a faithful grading $\cC=\oplus_{g\in G}\cC_g$,
    \item[(iii)]  isomorphisms \begin{align*}
        c_{X,Y}:X\otimes Y\to g(Y)\otimes X, && g\in G, X\in \cC_g, Y\in \cC,
    \end{align*}natural in $X$ and $Y$. The isomorphisms $c_{X,Y}$ are called the \emph{$G$-braiding}.
\end{enumerate}This structures should satisfy the following conditions:
\begin{enumerate}
    \item[(a)] $g_*(\cC_h) \subseteq \cC_{ghg^{-1}}$, for all $g, h \in G$,
    \item[(b)] The diagrams
   \begin{equation*}\label{axiom 1 trenza}
\begin{tikzcd}
g_*(X\ot Z) \ar{dd}{\can} \ar{rrrr}{g_*(c_{X,Z})} &&&& g_*(h_*(Z)\ot X)\ar{dd}{\can}\\\\   
g_*(X)\ot g_*(Z) \ar{rrrr}{c_{g_*(X),g_*(Z)}} &&&& (ghg^{-1})_*g_*(Z)\ot g_*(X) %\\\\  (ghg^{-1})_*g_*(Z)\ot g_*(X) \ar{rrrr}{\phi(ghg^{-1},g)_Z^{-1}\ot \id_{g_*(Z)} }  &&&& (gh)_*(Z)\ot g_*(X) \ar{uu}{\phi(g,h)_X\ot \id_{g_*(X)}}
\end{tikzcd}
\end{equation*}
commute  for all $X\in \cC_h, Z\in \cC, g,h\in G$.
\item[(c)]  The  diagrams

\begin{equation*}\label{trenzas G}
\begin{tikzcd}
X \ot Y\ot Z \ar{rr}{c_{X,Y\ot Z}} \ar{d}{c_{X,Y}\ot\id_{Z}} &&   g_*(Y\ot Z)\ot X \ar{d}{\can} \\
g_*(Y)\ot X\ot Z \ar{rr}{\id_{g_*(Y)}\ot c_{X,Z}}&&  g_*(Y)\ot g_*(Z) \ot X
\end{tikzcd}
\end{equation*}
commute for all $X\in \cC_g, Y,Z\in \cC$ and the diagrams 

\begin{equation*}\label{trenzas 2}
\begin{tikzcd}
X \ot Y\ot Z \ar{rr}{c_{X\ot Y, Z}} \ar{d}{\id_X\ot c_{Y,Z}} &&   (gh)_*(Z)\ot X\ot Y  \ar{d}{\can} \\
X\ot h_*(Z)\ot Y\ar{rr}{c_{X,h_*(Z)}\ot \id_Y}&&  g_*h_*(Z)\ot X \ot Y
\end{tikzcd}
\end{equation*}
commute for all $X\in \cC_g, Y\in \cC_h, Z\in \cC, g,h\in G$.
\end{enumerate}
The isomorphisms $\can$ are the natural isomorphisms constructed using the natural isomorphisms of the action of $G$ on $\cC$. 
\end{definition}

The definition of equivalence for  $G$-crossed monoidal categories can be found in \cite[Section 5.2]{GALINDO-Coherence}.

From now on by a $G$-crossed fusion category we will understand a faithful $G$-crossed fusion category.

In \cite[Theorem 7.12]{ENO3} it was proved that there is a correspondence between equivalence classes of (faithful) $G$-crossed  fusion categories $\cC$ with $\cC_e=\cB$ and equivalence classes of homomorphisms from the discrete monoidal 2-category $\underline{\underline{G}}$ to $\underline{\underline{\Pic{\cB}}}$, or equivalently homotopy classes of maps from $BG$ to $B\underline{\underline{\Pic{\cB}}}$.

In order to construct all maps from $\underline{\underline{G}}$ to $\underline{\underline{\Pic{\cB}}}$ we can use  obstruction theory, see \cite[Section 8]{ENO3}. First we fix a group homomorphism  $\rho:G\to \Pic{\cB}$. The obstruction to the existence of a  monoidal functor $\widetilde{\rho}:\underline{G}\to \underline{\Pic{\cB}}$, whose truncation is $\rho$ is given by the element $O_3(\rho)\in H^3(G,\Inv(\cB))$. In case that $O_3(\rho)$ vanishes, the equivalence classes of liftings of $\rho$ form a non-empty torsor over $H^2(G,\Inv(\cB))$. Now, once a monoidal functor $\widetilde{\rho}: \underline{G}\to \underline{\Pic{\cB}}$ is fixed, again the obstruction to the existence of a homomorphism  $\widetilde{\widetilde{\rho}}:\underline{\underline{G}}\to \underline{\underline{\Pic{\cB}}}$ whose truncation is $\widetilde{\rho}$ is given by the element $O_4(\widetilde{\rho})\in H^4(G,\ku^\times)$. In case that $O_4(\widetilde{\rho})$ vanishes, the equivalence classes of liftings of $\widetilde{\rho}$ form a non-empty torsor over $H^3(G,\ku^\times)$.

Following \cite{MR1128130}, we define for each $\omega \in Z^3(G; \ku^\times)$,  the maps

\begin{align*}
\gamma_{g,h}(x) := \frac{\omega(g,h,x)\omega(^{gh}x,g,h)}
{\omega(g,\ ^hx,h)},&&
\mu_g(x,y) := \frac{\omega(^gx,g,y)}
{\omega(^gx,^gy,g) \omega(g,x,y)}.
\end{align*}

for all $g,h,x,y \in G$.

The next proposition describes explicitly the action of $H^3(G,\ku^\times)$ on the set of $G$-crossed fusion categories.

\begin{proposition}
Let $\cB=\oplus_{g\in G}\cB_g$ be a $G$-crossed  fusion category with $G$-action \begin{align*}
    (g_*,\psi_g):\cB \to \cB, && \phi(g,h):(gh)_*\to g_*\circ h_*,
\end{align*} for all $g,h \in G$. For each $\omega \in Z^3(G,\ku^\times)$ we can define a new  $G$-crossed  fusion category $\cB^\omega$. As $G$-graded $\ku$-linear abelian category $\cB^\omega = \cB$,   associativity constraint and $G$-braiding
\begin{align*}
a^\omega_{X_g,X_h,X_k}:=\omega(g,h,k)a_{X_g,X_h,X_k}, &&  c_{X_g,X_h}^\omega:= c_{X_g,X_h},
\end{align*}
and $G$-action  $g_*:\cB\to \cB$ with the natural isomorphisms
\begin{align*}
   \psi_g^\omega(V_x\otimes V_{y})&:=\mu_g(x,y)\psi_h(V_x\otimes V_{y}),\\
\phi^\omega(g,h)(V_x)&:=\gamma_{g,h}(x)\phi(g,h)(V_x),
\end{align*}
for all $V_g\in \cB_g, V_x\in \cB_x, V_y\in \cB_y$, $g,x,y\in G$. Moreover, for $\omega, \omega' \in Z^3(G,\ku^\times)$, we have that $\cB^\omega$ is equivalent to $\cB^{\omega'}$ as $G$-crossed  fusion categories if and only if $\omega$ and $\omega'$ are cohomologous.
\end{proposition}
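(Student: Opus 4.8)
The plan is to verify directly that the data $(a^\omega, c^\omega, \psi_g^\omega, \phi^\omega(g,h))$ define a valid $G$-crossed fusion category, and then to identify the resulting structure with the image of $\rho$ (the fixed homomorphism $G\to\Pic{\cB}$) under the $H^3(G,\ku^\times)$-action coming from obstruction theory, so that the ``cohomologous implies equivalent'' and ``equivalent implies cohomologous'' statements reduce to the torsor statement in \cite[Section 8]{ENO3}. First I would check that $a^\omega$ satisfies the pentagon: since $a_{X_g,X_h,X_k}$ already satisfies it and the scalars multiply as $\omega(g,h,k)\omega(g,hk,l)$ versus $\omega(gh,k,l)\omega(g,h,kl)$, the pentagon for $a^\omega$ holds precisely because $\omega$ is a $3$-cocycle on $G$. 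Then I would check the three $G$-crossed braiding axioms (b), (c) and the compatibility of $(g_*,\psi_g^\omega)$ with the tensor structure, plus the hexagon-type coherence $\phi^\omega$ must satisfy: each of these becomes an identity in $\cB$ multiplied by a ratio of $\gamma$'s and $\mu$'s, and the point is that the functions $\gamma_{g,h}$ and $\mu_g$ defined from $\omega$ were built in \cite{MR1128130} exactly so that these ratios collapse to $1$ when $\omega$ is a cocycle. So the proof is a bookkeeping verification that the Dijkgraaf--Witten-style cocycle identities for $(\gamma,\mu)$ match the coherence diagrams in the definition of a $G$-crossed category.

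For the equivalence part, I would argue as follows. The construction $\cB\mapsto\cB^\omega$ only changes the associativity/$G$-action data by scalars coming from $\omega$, hence it depends only on $\omega$ through its class in $H^3$ once we know it is well-defined; more structurally, $\cB^\omega$ corresponds to the homomorphism $\underline{\underline G}\to\underline{\underline{\Pic\cB}}$ obtained from the one classifying $\cB$ by acting with the class of $\omega$ under the torsor action of $H^3(G,\ku^\times)$ described right before the proposition. Therefore $\cB^\omega\simeq\cB^{\omega'}$ as $G$-crossed fusion categories iff the two homomorphisms $\underline{\underline G}\to\underline{\underline{\Pic\cB}}$ are equivalent, iff $[\omega]=[\omega']$ in $H^3(G,\ku^\times)$ by the freeness of that torsor action. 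Concretely, in one direction, if $\omega=\omega'\,d\beta$ for a $2$-cochain $\beta$, then $\beta$ gives explicit rescaling isomorphisms $X_g\mapsto X_g$ (multiplication by a scalar on each graded component, twisted by $\beta$) that intertwine $a^\omega$ with $a^{\omega'}$ and $\psi^\omega,\phi^\omega$ with their primed versions, exactly as in the classical comparison of twisted group categories $\Vc_G^\omega$; in the other direction, any $G$-crossed equivalence $\cB^\omega\to\cB^{\omega'}$ that is the identity on underlying $G$-graded categories and on the action functors must be given by such scalars, and chasing the associativity and action compatibility forces $\omega/\omega'$ to be a coboundary.

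The main obstacle I expect is purely organizational rather than conceptual: one must carefully match the sign/placement conventions in the diagrams (b) and (c) — including all occurrences of $\can$, which is itself assembled from $\phi$ and the monoidal structure of the $G$-action — against the definitions of $\gamma_{g,h}$ and $\mu_g$, because a single misplaced argument ($^gx$ versus $^{gh}x$, or $g_*h_*$ versus $(gh)_*$) will make a cocycle identity fail to apply. A clean way to sidestep most of this is to invoke the classification \cite[Theorem 7.12]{ENO3} together with the explicit description of the $H^3$-action from \cite[Section 8]{ENO3}: once one checks that $\cB\mapsto\cB^\omega$ is exactly this action on the level of classifying maps $\underline{\underline G}\to\underline{\underline{\Pic\cB}}$, both the well-definedness and the equivalence criterion follow formally, and the hands-on cocycle computation with $(\gamma,\mu)$ only needs to be done once, to establish that identification. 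I would present the argument in that order: (i) well-definedness via the $(\gamma,\mu)$ cocycle identities, (ii) identification with the torsor action, (iii) the equivalence iff cohomologous statement as a corollary of freeness of the torsor, spelling out the explicit rescaling by a $2$-cochain only as an illustrative remark.
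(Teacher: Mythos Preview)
Your proposal is correct and follows essentially the same approach as the paper: the paper's entire proof is the single sentence ``It is a straightforward computation that follows from the 3-cocycle condition of $\omega$,'' which is precisely the direct verification via the Dijkgraaf--Pasquier--Roche identities for $(\gamma,\mu)$ that you outline in part (i). Your torsor argument from \cite[Section 8]{ENO3} for the ``moreover'' clause is more than the paper spells out, but it is a clean way to handle the equivalence criterion and is consistent with how the surrounding text sets up the $H^3(G,\ku^\times)$-action.
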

\begin{proof}
It is a straightforward computation that follows from the 3-cocycle condition of $\omega$.

 \end{proof}

Let $\cB$ and $\cD$ be  $G$-crossed fusion categories. We will say that $\cB$ and $\cD$ are \emph{twist equivalent} if there is $\omega \in Z^3(G,\ku^\times)$ such that $\cB^\omega$ and $\cD$ are equivalent as $G$-crossed  fusion categories.

\section{Finite reflection groups over finite fields}

A \emph{quadratic form} on a vector space $V$ over a field $\F$ is a function $Q:V\to \F$ such that 
\begin{enumerate}
    \item[(i)] $Q(\alpha v)=\alpha^2Q(v)$, \quad $v\in V, \alpha \in \F$,
\item[(ii)] $Q(v+u)-Q(v)-Q(u)$ is $\F$-bilinear in $v$ and $u$.
\end{enumerate}

If $B$ is a symmetric bilinear form on $V$, the \emph{associated quadratic} form $Q:V\to \F$ is defined by $Q(v):=B(v,v)$. If $\F$ has characteristic different from 2,  
\[
B(u,v)=\frac{1}{2}[Q(v+u)-Q(u)-Q(v)],
\] and the bilinear form $B$ is completely determined by the quadratic form $Q$. The quadratic form $Q$ is called \emph{non-degenerate} if $B$ is non-degenerate.

A (finite) \emph{quadratic space} is a pair $(V,Q)$ where $V$ is a finite dimensional vector space and $Q$ a non-degenerate quadratic form on $V$. 

A map $f:(V,Q_V)\to (W,Q_W)$ between quadratic spaces is said to be an \emph{isometric} isomorphism if

\begin{itemize}
    \item[(i)] $f:V\to W$ is a linear isomorphism,
    \item[(ii)] $Q_W(f(v))=Q_V(v)$ for all $v\in V$.
\end{itemize}
The \emph{orthogonal group} of $(V,Q)$ is the group of isometric isomorphisms from a quadratic space to itself, that is, 
\[O(V,Q):=\{f\in \GL(V): Q(f(v))=Q(v), \  \forall v\in V\}.\] %Equivalently $f\in O(V,Q)$ is and only if $B(f(v),f(w))=B(v,w)$ for all $v,w\in V$.

\subsection{Quadratic spaces  over finite fields}\label{subsec:basic def quad spaces}

In this section all vector spaces will be over a finite field $\F_q$, a field of $q=p^k$ elements, $p$ an \emph{odd} prime.

A quadratic space is said to be \emph{isotropic} if there is a non-zero vector on which the form evaluates to zero. Otherwise the quadratic form is \emph{anisotropic}.

A 2-dimensional  quadratic space $L$ is called a \emph{hyperbolic plane} if it has a basis $\{u,v\}$ with $Q(u)=Q(v)=0$ and $B(u,v)=1$. % The basis $\{u,v\}$ is called a  hyperbolic pair.  
%Moreover $\hat{B}_{\{u,v\}}=\begin{pmatrix} 0 & 1 \\  -1 & 0 \end{pmatrix} $.

For a fixed dimension there are exactly two isomorphism classes of  quadratic spaces, \cite[Corollary 4.10]{Grove-Book}. Representatives of each class are 
\begin{align}\label{Def Q+}
    (\F_q^n,Q_1), &&  Q_1(x_1,\ldots x_n)=\sum_{i=1}^nx_i^2,
\end{align}and 
\begin{align}\label{Def Q-}
    (\F_q^n,Q_\alpha), &&  Q_\alpha(x_1,\ldots x_n)=\alpha x_1^2+\sum_{i=2}^nx_i^2,
\end{align}where $\alpha \in \F_q^{\times}$ is a non-square element.

We write $\F^{\times 2}=\{a^2| a\in \F^\times\}$, the subgroup of all squares in $\F^\times$. If $B$ is a non-degenerate symmetric bilinear form on $V$ and $\hat{B}$ is a representing matrix, the \emph{discriminant} is defined as 
\[
\operatorname{Discr}(B):=
 \det(\hat{B})\F^{\times 2} \in \F^{\times}/\F^{\times 2}\cong \Z/2\Z. 
\]
The dimension of $V$ and the discriminant are complete invariants for  quadratic spaces.

If $(V,Q)$ is a quadratic space of odd dimension, and $d\in \F^\times$ is a non-square, the quadratic space $(V,dQ)$ has $\operatorname{Discr}(dQ)=d\operatorname{Discr}(Q)$, that is, $(V,dQ)$ is  non-isomorphic to $(V,Q)$ and 
\[O(V,Q)=O(V,dQ).\]Thus, in odd dimension the orthogonal groups will be denoted just by \[O(2n+1,q).\] 

In even dimension a  quadratic space $(V,Q)$ can be written as a direct orthogonal sum:

\[V=L_1\oplus L_2\oplus\cdots\oplus L_m\oplus W,\]where the $L_i$'s are hyperbolic planes and $W$ is the zero space or an anisotropic plane. If $W=0$,  the orthogonal group $O(V,Q)$ is denoted by $O^+(2n,q)$, otherwise  $O(V,Q)$ is denoted by $O^-(2n,q)$. The groups  $O^+(2n,q)$ and $O^{-}(2n,q)$ have different orders. % In fact, the order of the orthogonal groups are

\subsection{Orthogonal reflection groups and their twisted forms}

In this subsection we will recall some basic notions about reflection groups.

We will denote by $V$ a finite dimensional vector space over a field $\F$ of characteristic zero or $p>2$ and $B$ a non-degenerate symmetric bilinear form on $V$.

Let $a\in V$ with $B(a,a)\neq 0$. As usual, we define $r_a\in O(V,Q)$, the \emph{reflection} along $a$  by \[r_a(v)= v- 2\frac{B( v ,a )}{B( a ,a )}v, \  \  \text{ for all } v\in V,\]where $B$ is the bilinear form associated to $Q$.

\begin{definition}

Let $(V,Q)$ be a  quadratic space. A \emph{ (finite) orthogonal reflection group} in $(V,Q)$ is a (finite) subgroup of $O(V,Q)$ generated by reflections. An orthogonal reflection group $G\subset O(V,Q)$ is called irreducible if $V$ is an irreducible representation of $G$.
\end{definition}
Let $\F_q$ be a finite field, and let $L$ be an extension field of $\F_q$. If $V$ is a vector space over $\F_q$, we will denote $V\otimes_{\F_q}L$ by $V_L$. If $(V,Q)$ is a quadratic space over  $\F_q$, we will denote by $(V_L,q_L)$ the quadratic space on $V_L$ with quadratic form $q_L:=q\otimes_{\F_q}\id_L$.

For a group $G$ (not necessary finite), and a  field $\F$,  the category of $\F$-linear orthogonal representations  of $G$ will be denoted by $\ORep(G,\F)$. The objects of $\ORep(G,\F)$ are pairs $((V,Q),\rho)$, where $(V,Q)$ is a quadratic space over $\F$ and $\rho:G\to O(V,Q)$ is a group homomorphism. A morphism $f:((V,Q_V),\rho_V)\to ((W,Q_W),\rho_W)$ is a $G$-linear map $f:V\to W$ such that $Q_W\circ f= Q_V$.  

If  $\rho: G\to O(V,Q),$ is an orthogonal representation and $\F\subset L$ is a field extension, we will denote $\rho_L$ the orthogonal representation $\rho_L(g)=\rho(g)\otimes_{\F}\id_{L}$ over the quadratic space $(V_L,Q_L)$. Two orthogonal representations $\rho$ and $\rho'$ are called \emph{twisted forms} of each other if $\rho_{\overline{\F}}\cong \rho'_{\overline{\F}}$, where $\overline{\F}$ is an algebraic closure of $\F$.

\begin{proposition}\label{lemma:descent}
Every irreducible orthogonal reflection group over a finite field has a unique non-isomorphic twisted form.
\end{proposition}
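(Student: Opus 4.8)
The plan is to unwind what a twisted form means for a reflection group living inside an orthogonal group over $\F_q$ and then count the possibilities. Suppose $G \subset O(V,Q)$ is an irreducible orthogonal reflection group over $\F_q$, with $\dim V = n$. A twisted form of $\rho\colon G \hookrightarrow O(V,Q)$ is an orthogonal representation $\rho'\colon G \to O(V',Q')$ over $\F_q$ that becomes isomorphic to $\rho$ after extending scalars to $\overline{\F_q}$. Since over the algebraic closure all non-degenerate quadratic forms of a fixed dimension are isometric, the pair $(V',Q')$ is also $n$-dimensional, so the only invariant distinguishing $(V',Q')$ from $(V,Q)$ is the discriminant (equivalently, in even dimension the $\pm$ type). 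Thus there are at most two candidate quadratic spaces, hence the statement will follow once I show (a) both types actually occur, i.e.\ the twisted form exists and is not isomorphic to $\rho$; and (b) the isomorphism class of $\rho'$ is determined by the quadratic space, i.e.\ there is really only one twisted form up to isomorphism for each type.

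First I would treat the odd-dimensional case, which is the easy one and slightly degenerate: when $n = 2m+1$ is odd, scaling $Q$ by a non-square $d$ gives $(V, dQ)$ with $\operatorname{Discr}(dQ) \ne \operatorname{Discr}(Q)$, yet $O(V,Q) = O(V,dQ)$ as subgroups of $\GL(V)$ (this identity is stated in the excerpt). So $\rho$ and the composite $G \hookrightarrow O(V,Q) = O(V,dQ)$ are twisted forms of one another (they agree after extending scalars, since over $\overline{\F_q}$ the scalar $d$ becomes a square), they are non-isomorphic as orthogonal representations because the quadratic spaces have different discriminants, and there are no others because the quadratic space is forced up to these two choices and the group homomorphism is literally the same map. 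This disposes of odd dimension cleanly.

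For the even-dimensional case I would argue as follows. Over $\overline{\F_q}$ fix the identification and regard $\rho'$ as a second $\F_q$-rational structure on the same $\overline{\F_q}$-representation carrying a compatible quadratic form. The set of such rational structures (up to isomorphism over $\F_q$) is governed by the nonabelian cohomology $H^1(\operatorname{Gal}(\overline{\F_q}/\F_q), \operatorname{Aut})$ where $\operatorname{Aut}$ is the automorphism group (over $\overline{\F_q}$) of $\rho$ as an \emph{orthogonal} $G$-representation. Because $G$ acts irreducibly on $V$ (this is where irreducibility is essential), Schur's lemma forces the $G$-equivariant endomorphisms of $V_{\overline{\F_q}}$ to be scalars; the ones preserving the quadratic form up to the Galois twist are then $\pm \id$, so $\operatorname{Aut} = \{\pm 1\}$, a trivial Galois module since $-1 \in \F_q$. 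With $\widehat{\Z} = \operatorname{Gal}(\overline{\F_q}/\F_q)$ acting trivially on $\Z/2\Z$, $H^1 = \Hom(\widehat{\Z}, \Z/2\Z) = \Z/2\Z$, giving exactly two rational forms. One of them is $\rho$ itself; the other is the nontrivial twisted form, and the cohomology class pins it down up to isomorphism, proving both existence and uniqueness. One must also check the nontrivial class is genuinely non-isomorphic to $\rho$: the two forms carry quadratic spaces of different discriminant (the twist changes the type from $O^+$ to $O^-$), which is a $G$-invariant, so they cannot be isomorphic.

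The main obstacle, and the step I would be most careful about, is the even-dimensional Schur-lemma computation of $\operatorname{Aut}$ in the orthogonal category: one must verify that an irreducible $G$-module $V$ over $\F_q$ really has $\overline{\F_q}$-endomorphism algebra equal to $\overline{\F_q}$ (i.e.\ that $V_{\overline{\F_q}}$ stays irreducible, or at least that the relevant endomorphism ring is as small as claimed), and that the quadratic form cuts the scalars down to $\pm 1$ rather than a larger subgroup; if $V_{\overline{\F_q}}$ were reducible one would have to work with the endomorphism algebra being a field extension $\F_{q^r}$ and recount, which could in principle change the answer. For reflection groups this should not happen — a reflection group genuinely uses the geometry of $V$ over $\F_q$ — but it is the point that needs an honest argument rather than a wave of the hand. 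An equally valid alternative, avoiding cohomology entirely, is the direct route: show by hand that scaling $Q$ by a non-square $d$ and simultaneously conjugating the $r_a$'s to reflections in $(V,dQ)$ produces an explicit twisted form of the opposite type, then show directly that any twisted form must, after choosing an isometry of the ambient quadratic spaces over $\overline{\F_q}$ that is defined over $\F_q$ up to the $\pm 1$ ambiguity, coincide with one of these two; I would present whichever of the two arguments is shorter given the conventions already set up in Section~3.
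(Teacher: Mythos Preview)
Your approach is essentially the same as the paper's: Galois descent gives a bijection between twisted forms and $H^1(\operatorname{Gal}(\overline{\F_q}/\F_q),\operatorname{Aut}(\rho_{\overline{\F_q}}))$, and the automorphism group is $\{\pm\id\}$ by Schur's lemma, yielding $H^1\cong\Z/2\Z$. The ``main obstacle'' you correctly flagged---absolute irreducibility of $V$---is exactly what the paper resolves by citing Bourbaki (Lie groups and Lie algebras, Ch.~V, \S2, Prop.~1): any irreducible reflection representation is absolutely irreducible, so $\operatorname{End}_G(V_{\overline{\F_q}})=\overline{\F_q}$ and the orthogonal automorphisms are precisely $\pm\id$. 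With that reference in hand your odd/even case split becomes unnecessary, since the descent argument works uniformly; the explicit construction you give for odd dimension (scaling $Q$ by a non-square) is what the paper records separately as a remark, and it works in even dimension too.
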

\begin{proof}
It follows from \cite[Propostion 1, \S 2]{Bourbaki} that any irreducible reflection representation is absolutely irreducible, that is, $\operatorname{End}_G(V)=\F_q$. By the general theory of Galois Descent, twisted forms are classified by 
$H^1(\operatorname{Gal}({\overline{\F_q}}/\F_q),\operatorname{Aut}(\rho_{\overline{\F}}))$, see \cite[Theorem III.8.15]{MR2723693}. We have that \[\operatorname{Aut}(\rho_{\overline{\F_q}})= \{\id_{V_{\overline{F_q}}}, -\id_{V_{\overline{\F_q}}}\}\cong \Z/2,\]
hence $H^1(\operatorname{Gal}({\overline{\F_q}}/\F_q),\operatorname{Aut}(\rho_{\overline{\F}}))\cong \Z/2\Z$. 
\end{proof}
\begin{remark}
Given an absolutely irreducible orthogonal representation $(\rho,(V,Q))$ of $G$ and a non-square element $\alpha\in \F_q$, the orthogonal representation $(\rho, (V,\alpha Q))$ is a non-isomorphic twisted $\F_q$-form. In fact, since $\rho$ is absolutely irreducible, if $f:(\rho,(V,Q))\to (\rho,(V,\alpha Q))$ is an isomorphism, we must have that $f=\beta \id_V$ and $\alpha Q(v)= Q(\beta v)$ for all $v\in V$. The last equation implies that $\alpha=\beta^2$, a contradiction if $\alpha$ is a non-square element. 
\end{remark}

\subsection{Orthogonal irreducible reflection groups over $\F_p$}

For future applications, in this section we will present the classification of irreducible orthogonal reflection groups, based on the classification of irreducible reflection groups over finite fields ( see  \cite{MR603578} and\cite{Warner}) and \cite[Theorem 3.1]{MR2074400}.

\subsubsection{Reflection groups of orthogonal type}

If $(V,Q)$ is a  quadratic space over $\F_q$, the full orthogonal group $O(V,Q)$ is an irreducible reflection group, \cite[Theorem 3.20]{Artin}. For $\dim(V)>1$, the group $O(V,Q)$ has two conjugacy classes of reflections, each of which generates an irreducible reflection group. 

If $\dim(V)$ is even then these two groups are conjugate under $\GL(V)$ and we will denote them by $O^{\pm}_{i}(2n,q)\subset O^{\pm}(2n,q)$. If $\dim(V)$ is odd, the  groups are not isomorphic. One of them, $O_1(2n+1,q)$, has center $\{\id_V, -\id_V\}$, and the other one,  $O_2(2n+1,q)$, has trivial center.
\subsubsection{Reflection groups of type $\overline{A}_{n,p}$}\label{subsubsec:modular A}

The symmetric group $\mathbb{S}_{n+2}$ acts on \[V=\{(x_1,\ldots,x_{n+2})\in \F_p^{n+2}:\sum_ix_i=0\},\] by permuting the coordinates. If $p | n+2$ and $n>2$, the 1-dimensional subspace $W=\operatorname{Span}_{\F_p}\{(1,\ldots,1)\}\subset V$ is invariant and the quotient  $V/W$ is an $n$-dimensional irreducible faithful representation of $\mathbb{S}_{n+2}$. This representation of $\mathbb{S}_{n+2}$ will be called $\overline{A}_{n,p}$.

\subsubsection{Modular reduction of reflection groups of Coxeter type}

Finite reflection groups over Euclidean spaces are Coxeter groups and they arise from root systems. We will follow \cite[page 93]{Kane-Book} for the notation and realization of the irreducible root systems.

Let $G=\langle r_1,\ldots ,r_n\rangle$ be an irreducible (crystallographic) finite reflection group with root system $A_n (n\geq 1), B_n  (n\geq 2), D_n (n\geq 4), E_n$ $(n=6,7,8),$ or $F_4$. For any odd prime $p$, the reduction of $G$ mod $p$, denoted by $G_p$ is the subgroup of $\GL_n(\F_p)$ generated by the modular images of the reflections $r_i$'s. These reductions will be denoted by $A_{n,p}, B_{n,p}, E_{n,p},$ and $F_{4,p}$.

For $A_n$, if $p | n+1$, the reduction is not irreducible (see the case $\overline{A}_{n,p}$). Analogously to $\overline{A}_{n,p}$,  the reflection representation of the Weyl group of type $E_6$ over $\Bbb F_3$ has a one dimensional invariant subspace, and the quotient by this subspace is a faithful 5-dimensional irreducible representation of this group, giving a reflection group which we denote by $\overline{E}_{5,3}$. However, $\overline{E}_{5,3}\cong O_2(5,3)$, which is why we will assume $p>3$ for $E_6$. 

The root systems of $H_3$ and $H_4$ can be defined over $\mathbb{Z}[e^{\pi i /5}]$ and their  discriminants are $3-\sqrt{5}$ and $(7-\sqrt{5})/2$, respectively, see \cite[Table 1, page 33]{Humphreys-reflection-book}

Let $p$ be a prime number and $p\mathbb{Z}[e^{\pi i /5}]\subset \mathfrak{m}\subset \mathbb{Z}[e^{\pi i /5}]$ a maximal ideal, then $\mathbb{K}_p=\mathbb{Z}[e^{\pi i/5}]/\mathfrak{m}$ is a finite field of characteristic $p$.  
The reduction of $H_i$ ($i=3,4$) modulo $p$, denoted by $H_{i,p}$, is the image of $W(H_i)$ (the reflection group associated to the root system) under the map $\rho_i:W(H_i)\subset \GL_n(\mathbb{Z}[e^{\pi i /5}])\to \GL_n(\mathbb{K}_p)$. If $p>2$, then  $\rho_i$ is injective. 

It follows from the realization of  the root systems of $H_i$ ($i=3,4$) given in \cite[page 95]{Kane-Book} that a set of generating reflections is given by

\begin{align*}
    H_{3,p}&=\Big \langle r_{(\alpha,\alpha-1,-1)}, r_{(-\alpha,\alpha-1,1)}, r_{(1,-\alpha,\alpha-1)} \Big \rangle,\\
    H_{4,p}&=\Big \langle r_{(\alpha,\alpha-1,-1,0)}, r_{(-\alpha,\alpha-1,1,0)}, r_{(1,-\alpha,\alpha-1,0)}, r_{(1,0,-\alpha,\alpha-1)} \Big \rangle,
\end{align*}
where $\alpha \in \mathbb{K}_p$, $\alpha^2-3\alpha+1=0$.

\begin{proposition}
Let $p$ be an odd prime. The reflection representation of $H_{i,p}$ ($i=3,4$) can be realized over $\F_p$ if and only if $p=5$ or $p>5$ and \begin{equation}\label{condition H}
    p^2-1= 0 \mod 5.
\end{equation}
\end{proposition}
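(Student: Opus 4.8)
The plan is to reduce the statement to the arithmetic fact that the parameter $\alpha$ (a root of $x^2-3x+1$) lies in $\F_p$ exactly under the stated hypothesis, and then to show that the reflection representation of $H_{i,p}$ is definable over $\F_p$ if and only if $\alpha\in\F_p$. First I would note that $x^2-3x+1$ has discriminant $5$, so $\F_p(\alpha)=\F_p(\sqrt5)$. If $p=5$, then $x^2-3x+1\equiv(x+1)^2\pmod5$, so $\alpha=-1\in\F_5$. If $p\ne5$, the polynomial is separable and $\alpha\in\F_p$ iff $5$ is a nonzero square in $\F_p$; by quadratic reciprocity (using that $p$ is odd), $5$ is a square mod $p$ iff $p\equiv\pm1\pmod 5$, i.e. iff $5\mid p^2-1$. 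For $p=3$ this fails (consistent with the exclusion $p>5$), and for $p>5$ it is precisely the congruence $p^2-1\equiv0\pmod5$. Thus ``$\alpha\in\F_p$'' is equivalent to ``$p=5$, or $p>5$ and $p^2-1\equiv0\pmod5$''.

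The ``if'' direction is then immediate: when $\alpha\in\F_p$, the vectors listed as roots of the generating reflections have coordinates in $\F_p$, so with respect to the standard bilinear form $B(x,y)=\sum_ix_iy_i$ each generator $v\mapsto v-2\tfrac{B(v,a)}{B(a,a)}a$ is a matrix over $\F_p$, and the reflection representation is realized over $\F_p$. (Using $\alpha^2=3\alpha-1$ one checks $B(a,a)=4\alpha\ne0$ for every listed root $a$, and $\alpha\ne0$, so all the reflections are well defined.)

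For the ``only if'' direction I would exhibit a single group element whose trace detects $\alpha$. If the reflection representation $\rho$ becomes, over $\overline{\F_p}$, isomorphic to a representation defined over $\F_p$, then $\tr\rho(g)\in\F_p$ for every $g\in H_{i,p}$. Take $g=r_ar_c$, the product of the first and third listed generators, so $a=(\alpha,\alpha-1,-1)$, $c=(1,-\alpha,\alpha-1)$ when $i=3$ and $a=(\alpha,\alpha-1,-1,0)$, $c=(1,-\alpha,\alpha-1,0)$ when $i=4$; one checks $a,c$ are linearly independent (a relation $c=\lambda a$ would force $2=0$), so $W=\langle a,c\rangle$ is a plane. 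Since $r_a(v)-v$ and $r_c(v)-v$ lie in $W$ (being multiples of $a$, resp. $c$), both $r_a$ and $r_c$ preserve $W$ and act as the identity on $V/W$; hence, with no nondegeneracy assumption on $W$,
\[
\tr(r_ar_c)=(n-2)+\Bigl(4\tfrac{B(a,c)^2}{B(a,a)B(c,c)}-2\Bigr)=n-4+4\tfrac{B(a,c)^2}{B(a,a)B(c,c)}.
\]
Substituting $B(a,a)=B(c,c)=4\alpha$, $B(a,c)=2(1-\alpha)$, and then $(1-\alpha)^2=\alpha$ and $\alpha^{-1}=3-\alpha$ (both from $\alpha^2-3\alpha+1=0$), this equals $n-4+\alpha^{-1}=(n-1)-\alpha$, i.e. $2-\alpha$ for $i=3$ and $3-\alpha$ for $i=4$. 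In either case $\tr(r_ar_c)\in\F_p$ forces $\alpha\in\F_p$, which together with the first paragraph proves the proposition.

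The computation is short, so there is no serious obstacle. The two points needing a little care are: choosing a group element whose character value is an affine function of $\alpha$ with invertible leading coefficient — the product $r_ar_c$ above does this — and deriving its trace formula without assuming $W$ is nondegenerate, since $W$ is in fact degenerate exactly when $p=5$; this is why the derivation uses only that $r_a,r_c$ fix $V/W$ pointwise. (Alternatively, one could appeal to the classical modular representation theory of $W(H_3)$ and $W(H_4)$, whose relevant irreducible representations are well known to have trace field $\F_p(\sqrt5)$, but the direct computation is more self-contained.)
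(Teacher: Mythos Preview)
Your proof is correct and follows the same strategy as the paper's: compute the trace of a product of two of the listed generating reflections, observe it is an $\F_p$-affine function of $\alpha$ with invertible linear part, deduce that realizability over $\F_p$ forces $\alpha\in\F_p$, and identify this with the stated congruence via quadratic reciprocity (the discriminant of $x^2-3x+1$ being $5$). The only differences are cosmetic: the paper multiplies the first two listed generators and asserts trace $\alpha-2$, whereas you multiply the first and third and obtain $n-1-\alpha$; you also supply more justification, deriving the general formula $\tr(r_ar_c)=n-4+4B(a,c)^2/(B(a,a)B(c,c))$ and noting that it does not require the plane $\langle a,c\rangle$ to be nondegenerate (relevant at $p=5$).
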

\begin{proof}
We have that
\begin{align*}
\operatorname{Tr}(r_{(\alpha,\alpha-1,-1)}r_{(-\alpha,\alpha-1,1)})&=\operatorname{Tr}(r_{(\alpha,\alpha-1,-1,0)}r_{(-\alpha,\alpha-1,1,0)})\\ &=\alpha-2.    
\end{align*}
Hence, if the reflection representation can be realized over $\F_p$, there is $\beta \in \F_p$ such that $\beta^2-\beta+1=0$. Thus, we have $p\geq 5$. For $p=5$, $\alpha=4 \in \F_5$, defines a realization of the reflection representation of $H_i$ ($i=3,4$). If $p>5$, the quadratic reciprocity implies that $x^2-x+1$ has a root in $\F_p$ if and only if  condition \eqref{condition H} holds. Conversely, let $\beta \in \Fp$, with $\beta^2-\beta+1=0$, then $\alpha=\beta+2\in \F_p$ satisfies $\alpha^2-3\alpha+1=0$, that is, $\alpha$ defines a realization of the reduction of the reflection representation of $H_i$ over $\F_p$.
\end{proof}

\begin{remark}
\begin{itemize}
\item  Let $p>5$, $p^2-1=0$ mod 5, and let $\zeta \in \F_p$ is a root of $5$. The discriminants  of the symmetric bilinear forms of the reductions of $H_3$ and  $H_4$ are  $3-\zeta$ and $(7-3\zeta)/2$, respectively. Thus, the reduction mod $p$ is non-degenerate. These reductions will be denoted by $H_{i,p}^{\zeta}$.
    
    \item The choice of the square root of $5$ in the reduction of $H_3$ affects the isomorphism class of the associated quadratic space. For instance, he discriminant of $H_{3,19}^9$ is $3$, and 3 does not have a square root modulo 19. On the other hand, the discriminant of $H_{3,19}^{10}$ is 6, and 5 is a square root of $6$ modulo 19. Thus  the reflection groups $H_{3,19}^9$ and $H_{3,19}^{10}$ are defined on non-equivalent quadratic spaces.

\item If $p=5$, the reductions $H_{i,5}$ also admit an invariant symmetric bilinear form. Moreover, $H_{3,5}\cong O_1(3,5)$ and $H_{4,5}\cong O^+(4,5)$, see \cite[0.11]{MR603578}. For this reason we have assumed $p>5$ for $H_{i,p}$ in Table \ref{tab:coxeter}.
\end{itemize}
\end{remark}

Finally, let us discuss the reductions of Coxeter groups of rank 2.

\begin{proposition}
Let $(V,Q)$ be a quadratic plane over $\F_p$ and $G\subset O(V,Q)$ an irreducible reflection group. Then $G$ is a dihedral group of order $2d$, where $d| p-1$ if $(V,Q)$ is a hyperbolic plane or $d | p+1$ if $(V,Q)$ is an anisotropic plane. 
\end{proposition}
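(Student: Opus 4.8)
The plan is to reduce the statement to the classical structure of the orthogonal group of a two-dimensional quadratic space over $\F_p$, which is a dihedral group (see \cite{Grove-Book,Artin}), and then to argue inside that dihedral group. First I would make the two cases explicit. In the hyperbolic case, fixing an isotropic basis $\{u,v\}$ with $B(u,v)=1$, every element of $O(V,Q)$ permutes the isotropic lines $\F_p u$ and $\F_p v$, and a short computation shows that $\operatorname{SO}(V,Q)=\{u\mapsto tu,\ v\mapsto t^{-1}v : t\in\F_p^\times\}$ is cyclic of order $p-1$, while the coset $O(V,Q)\setminus\operatorname{SO}(V,Q)$ consists of the involutions $u\mapsto tv,\ v\mapsto t^{-1}u$, which are precisely the reflections. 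In the anisotropic case I would instead realize $(V,Q)$ as $\F_{p^2}$ equipped with a nonzero $\F_p$-multiple of the norm form $N(x)=x^{p+1}$; then $\operatorname{SO}(V,Q)$ is the norm-one subgroup of $\F_{p^2}^\times$, cyclic of order $p+1$, and the complementary coset (Frobenius composed with a norm-one scaling) again consists of reflections. From either model I extract the facts I need: $\operatorname{SO}(V,Q)$ is cyclic of order $p-1$ (hyperbolic) or $p+1$ (anisotropic), it has index $2$ in $O(V,Q)$, and, since $O(V,Q)\setminus\operatorname{SO}(V,Q)$ consists of involutions, every reflection inverts $\operatorname{SO}(V,Q)$ by conjugation.

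Granting this, the argument is short. Given an irreducible reflection group $G\subseteq O(V,Q)$, set $C:=G\cap\operatorname{SO}(V,Q)$. Since $G$ is generated by reflections and acts irreducibly it is nontrivial, so it contains a reflection and hence is not contained in $\operatorname{SO}(V,Q)$; thus $[G:C]=2$ and $|G|=2d$ where $d:=|C|$. As a subgroup of a cyclic group, $C$ is cyclic, so $d$ divides $|\operatorname{SO}(V,Q)|$, which is $p-1$ in the hyperbolic case and $p+1$ in the anisotropic case. Picking any reflection $r\in G$ gives $G=C\rtimes\langle r\rangle$ with $r$ acting on $C$ by inversion, so $G$ is the dihedral group of order $2d$. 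To confirm it is a genuine dihedral group I would invoke irreducibility a second time to exclude $d\le 2$: in that case $G$ has exponent $2$ and is either $\langle r_a\rangle$ or a Klein four group $\{\id,r_a,r_b,-\id\}$ with $a\perp b$, and in both cases the line $\F_p a$ is $G$-invariant, contradicting irreducibility; hence $d\ge 3$.

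I do not anticipate a real obstacle. The only substantive ingredient is the description of $\operatorname{SO}(V,Q)$ together with its order in the hyperbolic versus anisotropic cases, and this is entirely standard; everything after that is elementary subgroup combinatorics in a dihedral group.
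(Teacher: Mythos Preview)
Your argument is correct and follows essentially the same route as the paper: both proofs describe $O(V,Q)$ explicitly in the hyperbolic and anisotropic cases, identify $\operatorname{SO}(V,Q)$ as cyclic of order $p\mp 1$, observe that the nontrivial coset consists entirely of reflections, and then read off the result from the subgroup structure of the resulting dihedral group. The paper simply says at the end that the conclusion ``follows easily from the description of the subgroups of a dihedral group,'' whereas you spell this step out via $C=G\cap\operatorname{SO}(V,Q)$; your additional remark excluding $d\le 2$ by irreducibility is correct and slightly sharpens the statement, though the paper does not make this explicit.
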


\begin{proof}
The proposition follows from the description of the orthogonal groups of quadratic planes, see \cite{Grove-Book}. 

Namely, if $V$ is a hyperbolic plane then $SO(V,Q)\cong \F_p^\times$ and  every $f\in O(V,Q)$ with $\operatorname{det}(f)=-1$ is a reflection. Moreover,  $O(V,Q)$  is a semidirect product $SO(V,Q)\rtimes \langle r_u \rangle$ for any reflection $r_u\in O(V,Q)$. The group $O(V,Q)$ is isomorphic to the dihedral group of order $2(p-1)$.

In order to describe $O(V,Q)$ for an anisotropic plane, consider $\gamma \in \Fp$ a non square element. Let $\F_{p^2}:=\Fp(\sqrt{\gamma})$, Then $\F_{p^2}$ is a two dimensional $\Fp$-space and the norm map
\begin{align*}
N:\F_{p^2}\to \Fp,&& a+b\sqrt{\gamma} \mapsto a^2-b^2\gamma,
\end{align*}
is an anisotropic quadratic form. Each $c\in K$ with $N(c)=1$ defines a map $\rho_c\in SO(\F_{p^2},N)$ via \begin{align*}
    \rho_c:\F_{p^2}\to \F_{p^2},&& v\mapsto cv.
\end{align*} In fact, this correspondence defines an isomorphism $SO(\F_{p^2},N)\cong \ker(N|_{K^*})$. Thus $SO(\F_{p^2},N)$ is a cyclic group of order $p+1$.

The map \begin{align*}
    \tau: \F_{p^2}\to \F_{p^2}, && a+b\sqrt{\gamma}\mapsto a-b\sqrt{\gamma}
\end{align*}is an orthogonal reflection. The group $O(V,N)$  is a semidirect product $SO(V,N)\rtimes \langle \tau \rangle$. Hence $O(V,N)$ is isomorphic to the dihedral group of order $2(p+1)$.

Now the proposition follows easily from the description of the subgroups of a dihedral group.% and Corollary \ref{cor:non-modular case}.
\end{proof}
The reflection groups constructed in the previous proposition will be denoted by $I_{2,p}(d)$.

\subsubsection{Classification theorem}

\begin{theorem}\label{Th:classification irr othogonal reflection groups}
Suppose $G \subset O(V,Q)$ is a finite irreducible group generated by orthogonal reflections over the finite field $\F_p$, 
with $p$ an odd prime. Then, $G$ is a twisted form of

\begin{enumerate}[leftmargin=*,label=\rm{(\alph*)}]
\item\label{reduction mo p2} the reduction mod p of a finite irreducible Coxeter group; or

\begin{table}[!htbp]

\begin{tabular}{ |p{2cm}|p{2cm}|p{6cm}|  }
 \hline
 $G$ & $\operatorname{dim}(V)$ &  Conditions on $p$ \\
 \hline
 $A_{n,p}$  &  $n\geq 1$   & $p\not{|} \ n+1$ \\
 \hline
   $B_{n,p}$  &  $n\geq 3$   &  \\
 \hline
   $D_{n,p}$  &  $n\geq 4$   & \\
 \hline
    $E_{6,p}$  &  $6$   & $p\neq 3$\\
 \hline
   $E_{7,p}$  &  $7$   &   \\
 \hline
    $E_{8,p}$  &  $8$   &  \\
 \hline
    $F_{4,p}$  &  $4$   &   \\
 \hline
     $H_{3,p}^\zeta,$    &  $3$   & $p>5$ and $p^2-1\equiv 0 \mod 5$ \\
 \hline    $H_{4,p}^\zeta$  &  $4$   &   $p>5$ and $p^2-1\equiv 0 \mod 5$ \\
 \hline
  $H_{4,p}^\zeta$  &  $4$   &  $p>5$, $p^2-1\equiv 0 \mod 5$ \\
 \hline   $I_{2,p}(d)$  &  $2$   &   $d| p\pm 1$. If $(V,Q)$ is anisotropic $d|p+1$  and  $d|p-1$ in the other case. \\
 \hline
\end{tabular}
\caption {Irreducible Coxeter reflection groups over $\Fp$.} \label{tab:coxeter}
\end{table}

\item\label{fam Sn+2}  $\overline{A}_{n,p}\cong \mathbb{S}_{n+2}$ , when $p | (n + 2)$; or 

\item\label{orthogonal-type2} the full orthogonal group $O(V,Q)$, or its subgroups $O_1(V,Q)$ or $O_2(V,Q)$.
\end{enumerate}
\end{theorem}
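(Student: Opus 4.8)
The plan is to reduce the statement to the classification of irreducible reflection groups over finite fields (due to Zalesskii--Serezhkin \cite{MR603578} and Wagner \cite{Warner}, or in the form given in Kantor--Seitz-type references like \cite{MR2074400}) together with the descent result of Proposition \ref{lemma:descent}. The key structural point is that any finite subgroup $G\subset O(V,Q)$ generated by reflections over $\F_p$ is, after extension of scalars to $\overline{\F_p}$, a finite irreducible reflection group over an algebraically closed field of characteristic $p$, and such groups have been classified. So the first step is to invoke that classification: over $\overline{\F_p}$, an irreducible finite reflection group is either the modular reduction of an irreducible Coxeter group (the families $A_n$, $B_n$, $D_n$, $E_{6,7,8}$, $F_4$, $H_3$, $H_4$, $I_2(d)$, under the stated congruence conditions on $p$ so that the reduction stays irreducible and non-degenerate), or one of the ``new'' modular families that do not lift to characteristic zero, namely the orthogonal groups $O(W,Q)$ and their reflection subgroups $O_1$, $O_2$, or the family $\overline{A}_{n,p}\cong\mathbb{S}_{n+2}$ arising from the permutation representation when $p\mid n+2$.

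The second step is to descend from $\overline{\F_p}$ back to $\F_p$. By Proposition \ref{lemma:descent} and the remark following it, an absolutely irreducible orthogonal representation over $\F_p$ has exactly one nontrivial twisted $\F_p$-form, obtained by scaling $Q$ by a non-square $\alpha$; and by \cite[Propostion 1, \S 2]{Bourbaki} every irreducible reflection representation is absolutely irreducible, so $\mathrm{End}_G(V)=\F_p$ and the descent theory applies cleanly. Hence each $\overline{\F_p}$-class on the list contributes at most two $\F_p$-classes, accounted for by the phrase ``$G$ is a twisted form of'' in the statement. One must check that all the enumerated groups in (a)--(c) are in fact definable over $\F_p$ in the first place — for the Coxeter reductions this is the content of the root-system realizations already recorded in the excerpt (with $H_3$, $H_4$ requiring $p=5$ or $p^2\equiv 1\bmod 5$), for $\overline{A}_{n,p}$ it is immediate from the permutation action on $\F_p^{n+2}$, and for $O(V,Q)$, $O_1$, $O_2$ it is by definition. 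The congruence and range conditions in Table \ref{tab:coxeter} ($p\nmid n+1$ for $A_{n,p}$, $p\neq 3$ for $E_6$, $n\geq 3$ for $B$, $n\geq 4$ for $D$, the $H$-conditions, and the divisibility condition on $d$ for $I_2$) are exactly the conditions under which the modular reduction remains irreducible, faithful, and carries a non-degenerate invariant form; the borderline cases ($p\mid n+1$ giving $\overline{A}_{n,p}$; $E_6$ over $\F_3$ giving $\overline{E}_{5,3}\cong O_2(5,3)$; $H_{i,5}$ being $O_1(3,5)$, $O^+(4,5)$) are absorbed into items (b) and (c) and so are not lost.

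The third step is a bookkeeping argument that the list is exhaustive: one runs through the classification of irreducible reflection groups in positive characteristic and observes that every entry either (i) lifts to a characteristic-zero Coxeter group — hence appears via its modular reduction in (a), possibly after twisting — or (ii) is one of the genuinely modular families, which are precisely the full orthogonal groups and their two reflection subgroups (case (c)), plus the symmetric-group family $\overline{A}_{n,p}$ (case (b)). One also records that for $\dim V>1$ the group $O(V,Q)$ has two conjugacy classes of reflections, generating $O_1$ and $O_2$ in odd dimension (distinguished by their centers $\{\pm\id\}$ versus trivial) or $\GL(V)$-conjugate subgroups $O^{\pm}_i(2n,q)$ in even dimension, as stated in \S 3.3.1; these are the only new reflection subgroups of the orthogonal group, so (c) is complete. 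The main obstacle, and the place where one must be careful rather than merely cite, is verifying the non-degeneracy and irreducibility conditions delimiting each family — in particular that the reductions $H_{3,p}$, $H_{4,p}$, $E_{6,p}$ behave as claimed at the small primes, and that no accidental isomorphisms between entries on the list have been double-counted (e.g. $H_{3,5}\cong O_1(3,5)$, $H_{4,5}\cong O^+(4,5)$, $\overline{E}_{5,3}\cong O_2(5,3)$), which is why those cases are explicitly excluded from the Coxeter table and pushed into (b) and (c). Once these coincidences and the descent count are pinned down, the theorem follows by assembling the cited classification with Proposition \ref{lemma:descent}.
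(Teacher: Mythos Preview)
Your overall strategy---invoke the Zalesski\u{\i}--Sere\v{z}kin/Wagner classification and then descend via Proposition~\ref{lemma:descent}---matches the paper's. But there is a genuine gap in your third step.

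The classification of irreducible reflection groups over finite fields is \emph{not} a classification of orthogonal reflection groups: it contains, in addition to the families you list, the imprimitive groups $G_p(m,m,n)$, $G_p(2m,m,n)$ and the exceptional groups $[EJ_3(5)]^3$, $[J_4(4)]^3$ (in the notation of \cite{MR603578}). Your assertion that the ``genuinely modular families\ldots\ are precisely the full orthogonal groups and their two reflection subgroups\ldots\ plus the symmetric-group family $\overline{A}_{n,p}$'' is exactly the content that must be proved, not assumed. The paper handles this by first noting that $O(V,Q)$ contains no nontrivial transvections (so the cited classification applies), and then invoking \cite[Theorem~3.1]{MR2074400} to show that $G_p(m,m,n)$, $G_p(2m,m,n)$, $[EJ_3(5)]^3$, $[J_4(4)]^3$ admit no nonzero invariant symmetric bilinear form and hence cannot sit inside $O(V,Q)$. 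Without this filtering step your list is not known to be exhaustive.

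A secondary omission: for $\overline{A}_{n,p}$ you assert that the invariant form ``is immediate from the permutation action,'' but you must check it is \emph{non-degenerate} on the quotient $V/W$. The paper does this by computing the discriminant in the basis $\overline{v}_i=\epsilon_i-\epsilon_{i+1}$, obtaining $n+1\equiv -1\in\F_p$ (since $p\mid n+2$), which is nonzero.
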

\begin{proof}
Let $G\subset O(V,Q)\subset \GL(V)$ be an irreducible reflection group. Since $O(V,Q)$ contains no non-trivial transversions, the classification of irreducible reflection groups over finite fields (up to scalar extension)  can be applied, see \cite{MR603578} and \cite{Warner}. 

We need to see what groups in the main Theorem of \cite{MR603578} admit a non-zero symmetric $G$-invariant bilinear form. It follows from  \cite[Theorem 3.1]{MR2074400} that the groups $G_p(m,m,n)$, $G_p(2m,m,n)$, $[EJ_3(5)]^3$ and $[J_4(4)]^3$ do not admit non-zero invariant bilinear forms. By definition, the groups of orthogonal type have a non-zero invariant bilinear form. Under the restriction of  Table \ref{tab:coxeter}, the induced canonical bilinear forms of the reductions mod $p$ of finite Coxeter groups are non-zero. For the case $\overline{A}_{n,p}$, we will follow the  notation of Subsection \ref{subsubsec:modular A}. A basis of $V\subset \F_p^{n+2}$ is given by  $\{v_i:=\epsilon_i-\epsilon_{i+1}: 1\leq i<n+2\}$, where $\{\epsilon_j: 1\leq j\leq n+2\}$ is the canonical  orthogonal basis of $\F_p^{n+2}$. Thus $v_1+2v_2+\ldots +(n+1)v_{n+1}=0$ modulo $W:=$Span$_{\F_p}\{(1,\ldots,1)\}$ and $\{\overline{v}_i:=\epsilon_i-\epsilon_{i+1}: 1\leq i\leq n\}$ is a basis of $V/W$. Hence the discriminant of the canonical associated bilinear form on $V/W$ is $n+1=-1\in \F_p$.
\end{proof}

\section{Reflection fusion categories}

\subsection{Braided pointed fusion categories and metric groups}\label{Subsection:braided pointed fusion cat}

Let $(\cB,c)$ be a pointed braided fusion category. Then the set of isomorphism
classes of simple objects of $\cB$ forms a finite Abelian group $A$. If $A$ has odd order, there exists a bicharacter $\beta:A\times A\to \ku^\times$  such that $\cC \cong \Vc_A^{\beta}$ as braided fusion categories,   where $\Vc_A^{\beta}$ is the braided fusion category of finite-dimensional $G$-graded vector spaces with braiding defined by $c_{\ku_a,\ku_b}=\beta(a,b)\id_{\ku_{ab}}$.

The function \[\bold{Q}:A\to \ku^\times,\quad a\mapsto \beta(a,a),\]is a \emph{quadratic form} on $A$, that is, $\bold{Q}(-a)=\bold{Q}(a)$ and \[\bold{B}(a,a')=\frac{\bold{Q}(a+a')}{\bold{Q}(a)\bold{Q}(a')},\quad \quad a,a'\in A,\] defines a bicharacter. 

Following \cite{DGNO}, we say that a \emph{pre-metric} group is a pair $(A,\bold{Q})$, where $A$ is a finite abelian group and $\bold{Q}:A\to \ku^\times$ is a quadratic form.  A pre-metric group is called \emph{metric} is the associated bicharacter is non-degenerate or equivalently the associated braided fusion category is non-degenerate.

A morphism of pre-metric groups
$(A_1, \bold{Q}_1) \to (A_2, \bold{Q}_2)$ is a homomorphism $\psi: A_1 \to A_2$ such that $\bold{Q}_2(\psi(a)) = \bold{Q}_1(a)$ for
all $a \in A_1.$ The group of all automorphisms of a metric group $(A,\bold{Q})$ will be denoted as $O(A,\bold{Q})$ and called the orthogonal group of   $(A,\bold{Q})$. 

Let  $A$ be an elementary abelian $p$-group, where $p$ is an odd prime. Once a  primitive $p$-th root of unity $\zeta \in \ku^\times$ is fixed, the formula \begin{align*}
    \bold{Q}(a)=\zeta^{Q(a)}, && \forall a\in A,
\end{align*} defines a bijective correspondence between metric groups over $\ku^\times$ of the form $(A,\bold{Q})$ and quadratic spaces $(A,Q)$, where $A$ is seen as a vector space over $\F_p$. Hence, there is a bijective correspondence between non-degenerate pointed braided fusion categories $\cD$ with $\Inv(\cD)= A$ and  quadratic spaces $(A,Q)$ over $\F_p$.

\subsection{Reflection fusion categories} 

Let $(A,Q)$ be a  quadratic space over $\F_p$. We will denote by $\Vc_{(A,Q)}$ the non-degenerate pointed braided fusion category associated to $(A,Q)$. It follows from \cite[Theorem 5.2]{ENO3} that $\Pic{\Vc_{(A,Q)}}= O(A,Q)$.

Let $\cC$ be a fusion category and $S=\{X_i\}_{i\in I}$ a set of objects. We will say $\cC$ is \emph{tensor generated} by  $S$ if for every simple object $Y\in \cC$, there are  $X_1,\ldots X_l\in S$ (possibly with repetitions) such that $Y\subset X_1\otimes \cdots \otimes X_l$. 
\begin{definition}
Let $\cB$ be a  $G$-crossed  fusion category where $\cB_e$ is a non-degenerate pointed braided fusion category with $\Inv(\cB_e)$ an elementary abelian $p$-group. 

We  will say that $\cB$ is a  \emph{reflection fusion category} if 

\begin{enumerate}
    \item\label{condition 1}$\Inv(\cB)=\Inv(\cB_e)$, and
    \item\label{condition 2} $\cB$ is tensor generated by $\Inv(\cB)$ and simple objects of $\operatorname{FP}$-dimension $\sqrt{p}$.
\end{enumerate}
\end{definition}

\begin{lemma}\label{Lemma:dimensions in B_g}
Let $\cB$ be a  $G$-crossed  fusion category where $\cB_e=\Vc_{(A,Q)}$ is a non-degenerate pointed braided fusion category. Let $T_g\in O(A,Q)$ be the isometric automorphism  associated to $\cB_g$. Then all simple objects in $\cB_g$ have  $\operatorname{FP}$-dimension $|\operatorname{Im}(\id_A-T_g)|^{1/2}$.
\end{lemma}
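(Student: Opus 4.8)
The strategy is to reduce the statement to a computation inside the module category $\cB_g$ viewed as an invertible module category over $\cB_e=\Vc_{(A,Q)}$, together with the description of invertible module categories over pointed braided fusion categories. First I would recall that, since $\cB_e$ is pointed with $\Inv(\cB_e)=A$ and $\cB$ is a $G$-crossed fusion category, each component $\cB_g$ is an invertible $\cB_e$-module category whose class in $\Pic{\Vc_{(A,Q)}}=O(A,Q)$ is precisely $T_g$. Recall that invertible $\Vc_{(A,Q)}$-module categories are classified by pairs $(T,\text{something})$, and under the identification $\Pic{\Vc_{(A,Q)}}\cong O(A,Q)$ the module category attached to $T_g\in O(A,Q)$ is built from the graph Lagrangian subgroup $L_g=\{(a,T_g(a)):a\in A\}$ of $A\oplus A$ with the hyperbolic form; equivalently, one realizes $\cB_g$ via an explicit construction as in \cite[Section 4]{ENO3}. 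The key point is that the simple objects of $\cB_g$ are indexed by, and their dimensions computed from, the double cosets / orbits governed by the subgroup $\im(\id_A-T_g)\subseteq A$.

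The main computation I would carry out is the following. Working in a concrete model of $\cB_g$ (for instance the one coming from a choice of cocycle data realizing $T_g$ as an element of $O(A,Q)$, or from the twisted group algebra description of invertible module categories over $\Vc_A^\beta$), one sees that $A=\Inv(\cB_e)$ acts on the set of simple objects of $\cB_g$ by tensoring, and this action is transitive. The stabilizer of any simple object $Y$ is exactly the subgroup $\{a\in A: a\otimes Y\cong Y\}$, which one identifies with $\ker(\id_A-T_g)^{\perp}$ or, more directly, one shows that tensoring by $a$ fixes $Y$ iff $a\in\im(\id_A-T_g)^{\perp}$; hence the number of simple objects of $\cB_g$ equals $|A|/|\im(\id_A-T_g)^\perp|=|\im(\id_A-T_g)|$. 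Since $\cB_g$ is an invertible module category, $\FPdim(\cB_g)=\FPdim(\cB_e)=|A|$, and the $A$-action permutes the simples transitively, all simple objects of $\cB_g$ have the same dimension $d$; from $(\text{number of simples})\cdot d^2=|A|$ we get $d^2=|A|/|\im(\id_A-T_g)|\cdot$... wait, let me instead use $\FPdim(\cB_g)=\sum d_i^2 = (\#\text{simples})\, d^2 = |\im(\id_A-T_g)|\,d^2$, so $d^2=|A|/|\im(\id_A-T_g)|=|\ker(\id_A-T_g)|$. Hmm — so I should double-check which combinatorial quantity counts the simples; the cleaner route is to directly show the number of simple objects of $\cB_g$ is $|\ker(\id_A-T_g)|=|A|/|\im(\id_A-T_g)|$, whence $d^2=|A|/|\ker(\id_A-T_g)|=|\im(\id_A-T_g)|$, giving $\FPdim=|\im(\id_A-T_g)|^{1/2}$ as claimed. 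I would nail this down by exhibiting the simple objects of $\cB_g$ explicitly as the orbits of $A$ acting on itself twisted by $T_g$, i.e. via the coset space $A/\im(\id_A-T_g)$.

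Concretely, the argument I would write is: (1) note $\cB_g$ is an invertible $\cB_e$-module category, so $\FPdim(\cB_g)=\FPdim(\cB_e)=|A|$; (2) the full subgroup $A=\Inv(\cB)\subseteq\cB$ acts by left tensor multiplication on $\Irr(\cB_g)$, and by invertibility of $\cB_g$ (or by the $G$-crossed braiding identifying $X\otimes a$ with $T_g(a)\otimes X$) this action is transitive with stabilizer of any simple $X$ equal to $\{a\in A: X\otimes a\cong X\}$; (3) identify this stabilizer with $\ker(\id_A - T_g)$ by a direct check using the $G$-braiding $c_{X,-}: X\otimes a \xrightarrow{\sim} T_g(a)\otimes X$ together with the fact that $a\otimes X\cong X$ forces (after moving $a$ past $X$) $a = T_g(a)$, i.e. $a\in\ker(\id_A-T_g)$; (4) conclude $|\Irr(\cB_g)|=[A:\ker(\id_A-T_g)]=|\im(\id_A-T_g)|$, and since all simples in the $A$-orbit have equal dimension $d$, $|A|=\FPdim(\cB_g)=|\im(\id_A-T_g)|\cdot d^2$, so $d=\left(|A|/|\im(\id_A-T_g)|\right)^{1/2}=|\ker(\id_A-T_g)|^{1/2}$.

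I realize there is a genuine sign/duality subtlety here: the lemma asserts the dimension is $|\im(\id_A-T_g)|^{1/2}$, not $|\ker(\id_A-T_g)|^{1/2}$, and these differ unless $|\im|=|\ker|$, which is false in general. So the \emph{crux} — and the step I expect to be the main obstacle — is getting the stabilizer computation in step (3) exactly right: I expect the correct statement to be that the stabilizer of a simple $X\in\cB_g$ under the $A$-action is $\im(\id_A-T_g)^{\perp}$ (the orthogonal complement with respect to the nondegenerate form $B$ associated to $Q$), which has order $|A|/|\im(\id_A-T_g)|$, so that $|\Irr(\cB_g)| = |\im(\id_A-T_g)|$ and hence $d^2 = |A|/|\im(\id_A-T_g)| = |\im(\id_A-T_g)^\perp|$... which still is not manifestly $|\im(\id_A-T_g)|$. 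The resolution must use $\dim\ker(\id_A-T_g) + \dim\im(\id_A-T_g) = \dim A$ \emph{plus} the orthogonality fact $\ker(\id_A-T_g) = \im(\id_A-T_g)^\perp$ valid for $T_g\in O(A,Q)$ (since $T_g$ is an isometry, $(\id-T_g)^* $ relates to $(\id - T_g^{-1})$ and one checks $\im(\id-T_g)^\perp=\ker(\id-T_g^{-1})=\ker(\id-T_g)$ has the same cardinality after accounting correctly). So the careful linear-algebra lemma $|\ker(\id_A-T_g)|=|\im(\id_A-T_g)^\perp|$ combined with $|A| = |\ker|\cdot|\im|$ yields $|\im(\id_A-T_g)^\perp| = |A|/|\im(\id_A-T_g)|$, and then $d^2 = |A|/|\Irr(\cB_g)| = |A|/|\im(\id_A-T_g)|$... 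I must reconcile this with the claimed answer by re-examining whether $|\Irr(\cB_g)|$ equals $|\ker(\id_A-T_g)|$ rather than $|\im(\id_A-T_g)|$; the honest plan is therefore to compute $|\Irr(\cB_g)|$ directly from the explicit model of the invertible module category (the coset-type parametrization), verify it equals $|A|/|\im(\id_A-T_g)|$, note that for an isometry this also equals $|\ker(\id_A-T_g)|$ only when... — in any case, the dimension then comes out as $\left(|A|/|\Irr(\cB_g)|\right)^{1/2} = |\im(\id_A-T_g)|^{1/2}$, which is exactly the assertion, and this makes clear that the real content is the identification $|\Irr(\cB_g)|=|\ker(\id_A-T_g)|=|A|/|\im(\id_A-T_g)|$ via the isometry property.
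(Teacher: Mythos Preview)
Your overall strategy coincides with the paper's: use that $A=\Inv(\cB_e)$ acts transitively on $\Irr(\cB_g)$ (since $\cB_e$ is pointed and $\cB_g$ is indecomposable), deduce all simples have the same dimension $d$, and combine $\FPdim(\cB_g)=\FPdim(\cB_e)=|A|$ with orbit--stabilizer to get $d^2=|\St(X)|$. The entire content of the lemma is then the identification $|\St(X)|=|\im(\id_A-T_g)|$, and this is exactly where your argument breaks.

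Your step (3) is incorrect as written. Moving $a$ past $X$ using the $G$-braiding gives $a\otimes X\cong X\otimes a\cong T_g(a)\otimes X$ for \emph{every} $a\in A$, not only for $a\in\St(X)$; this yields the inclusion $\im(\id_A-T_g)\subseteq \St(X)$, but it does \emph{not} force $a=T_g(a)$ for $a\in\St(X)$, so it does not identify the stabilizer with $\ker(\id_A-T_g)$. Your subsequent attempts to repair this via orthogonal complements are circular: $\im(\id_A-T_g)^\perp=\ker(\id_A-T_g)$ for an isometry is just the same wrong candidate again, and you end up asserting the needed equality $|\Irr(\cB_g)|=|\ker(\id_A-T_g)|$ without proof. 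The paper does not try to extract the stabilizer from the $G$-braiding at all; it simply invokes \cite[Remark~5.7]{DN}, which gives $|\St(X)|=|\im(\id_A-T_g)|$ from the explicit description of the invertible $\Vc_{(A,Q)}$-module category attached to $T_g$ (as a $\Vc_A$-module category it is $\M(H,\psi)$ with $H=\im(\id_A-T_g)$, so the stabilizer of any simple is exactly $H$). Your final fallback plan, to read off $|\Irr(\cB_g)|$ from the explicit coset model, is therefore the right move and is precisely what the cited result in \cite{DN} provides; but you should drop the $G$-braiding heuristic for the stabilizer, since it only gives one inclusion.
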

\begin{proof}
Since $\cB_e$ is pointed and $\cB_g$ is an indecomposable module category,  $\Inv(\cB_e)$ acts transitively on $\Irr(\cB_g)$. In particular, all simple objects in $\cB_g$ have the same dimension. Let $X\in \Irr(\cB_g)$ and $\St(X)=\{Y\in \Inv(\cB_e): Y\otimes X\cong X\}$. Since $\FPdim(\cB_g)=\FPdim(\cB_e)$, we have that $\FPdim(X)^2=|\St(X)|$. It follows from \cite[Remark 5.7]{DN}, that $|\St(X)|=|\operatorname{Im}(\id_A-T_g)|$. Then, $\FPdim(X)=|\operatorname{Im}(\id_A-T_g)|^{1/2}$.

\end{proof}

\begin{theorem}\label{Thm:Def reflec cat}
Let $\cB$ be a  $G$-crossed  fusion category where $\cB_e$ is a non-degenerate pointed braided fusion category with $\Inv(\cB_e)$ an elementary abelian $p$-group.  Then $G$ is a  reflection group in $\operatorname{Pic}(\cB_e)$ if and only if $\cB$ is a reflection fusion category.
\end{theorem}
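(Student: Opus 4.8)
The plan is to translate the two defining conditions of a reflection fusion category directly into the statement that $G$ is generated by orthogonal reflections, using Lemma~\ref{Lemma:dimensions in B_g} as the dictionary between the group theory on $(A,Q)$ and the categorical structure on $\cB$. The key observation is that Lemma~\ref{Lemma:dimensions in B_g} says: the simple objects of $\cB_g$ have FP-dimension $|\operatorname{Im}(\id_A - T_g)|^{1/2}$, where $T_g \in O(A,Q)$ is the isometry associated to the invertible $\cB_e$-module category $\cB_g$. Since $A$ is an elementary abelian $p$-group, $|\operatorname{Im}(\id_A - T_g)| = p^{\operatorname{rk}(\id_A - T_g)}$, so the FP-dimension of objects in $\cB_g$ is $p^{r_g/2}$ where $r_g = \operatorname{rk}(\id_A - T_g) = \operatorname{codim}\ker(\id_A - T_g) = \operatorname{codim}(A^{T_g})$ is the codimension of the fixed space of $T_g$. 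Thus: $\cB_g$ has invertible objects $\iff r_g = 0 \iff T_g = \id$; and $\cB_g$ has objects of FP-dimension $\sqrt p$ $\iff r_g = 1 \iff T_g$ is a pseudo-reflection on $(A,Q)$ (codimension-one fixed space), hence, being an isometry of a nondegenerate quadratic form in odd characteristic, an honest orthogonal reflection $r_a$ for some anisotropic $a$.

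First I would set up the homomorphism $T : G \to O(A,Q)$, $g \mapsto T_g$: this is exactly the composite $G \xrightarrow{\rho} \operatorname{Pic}(\cB_e) = O(A,Q)$, where the identification $\operatorname{Pic}(\Vc_{(A,Q)}) = O(A,Q)$ is Theorem~5.2 of \cite{ENO3} (recalled in Subsection~4.2), and faithfulness of the $G$-grading guarantees $\cB_g \neq 0$ for all $g$, so $T$ is a well-defined group homomorphism with image a subgroup $T(G) \subseteq O(A,Q)$. Next, I would prove the forward direction: assume $G$ is a reflection group in $\operatorname{Pic}(\cB_e) = O(A,Q)$, i.e., $T(G)$ is generated by reflections $r_{a_1}, \dots, r_{a_k}$. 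Condition~\eqref{condition 1} ($\operatorname{Inv}(\cB) = \operatorname{Inv}(\cB_e)$): by Lemma~\ref{Lemma:dimensions in B_g}, $\cB_g$ contains an invertible object iff $T_g = \id_A$; since the grading is faithful this happens exactly for $g$ in the kernel of $T$, but faithfulness of $G$ acting as a reflection group forces $T$ injective (a reflection group is by definition a subgroup of $O(A,Q)$, so $\rho$ is injective), hence $g = e$, giving $\operatorname{Inv}(\cB) = \operatorname{Inv}(\cB_e)$. Condition~\eqref{condition 2}: each generating reflection $r_{a_i} = T_{g_i}$ for some $g_i \in G$, and then $\cB_{g_i}$ consists of simple objects of FP-dimension $\sqrt p$ by Lemma~\ref{Lemma:dimensions in B_g} with $r_{g_i} = 1$; since $\cB = \bigoplus_{g\in G}\cB_g$ and $G$ is generated by the $g_i$, every $g = g_{i_1}\cdots g_{i_l}$, so every simple $Y \in \cB_g$ sits inside a tensor product $X_1 \otimes \cdots \otimes X_l$ with $X_j \in \cB_{g_{i_j}}$ (using $\cB_{g}\otimes\cB_{h}\subseteq\cB_{gh}$ together with semisimplicity), and each $X_j$ either is of FP-dimension $\sqrt p$ or, after tensoring by invertibles of $\cB_e$ — which are available — can be taken so; hence $\cB$ is tensor generated by $\operatorname{Inv}(\cB)$ and objects of FP-dimension $\sqrt p$.

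For the converse, assume $\cB$ is a reflection fusion category. Condition~\eqref{condition 2} says every simple $Y$ lies in a tensor product of invertibles and objects of FP-dimension $\sqrt p$; the invertibles lie in $\cB_e$ and each FP-dimension-$\sqrt p$ simple object lies in some component $\cB_{g}$ with (by Lemma~\ref{Lemma:dimensions in B_g}) $r_g = 1$, i.e., $T_g = r_{a}$ a reflection on $(A,Q)$. Let $R \subseteq G$ be the set of such $g$; then the components $\cB_g$ with $g$ a product of elements of $R$ exhaust all of $\cB$ (any simple $Y$ lives in $\cB_{g_{i_1}\cdots g_{i_l}}$ for $g_{i_j} \in R \cup \{e\}$), and since the grading is faithful — $\cB_h \neq 0$ for every $h \in G$ — this forces $G = \langle R\rangle$. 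Moreover condition~\eqref{condition 1} forces $T$ to be injective: if $T_g = \id_A$ for some $g$, then $\cB_g$ has an invertible object, so $g \in \operatorname{Inv}(\cB)/\operatorname{Inv}(\cB_e)$-support, but condition~\eqref{condition 1} says the only such component is $\cB_e$, so $g = e$. Hence $\rho : G \hookrightarrow O(A,Q) = \operatorname{Pic}(\cB_e)$ identifies $G$ with $\langle r_{a} : g \in R\rangle$, a subgroup generated by orthogonal reflections — that is, $G$ is a reflection group in $\operatorname{Pic}(\cB_e)$.

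The step I expect to require the most care is the bookkeeping in condition~\eqref{condition 2} for the converse: one must check that a simple object of FP-dimension $\sqrt p$ really does force $r_g = 1$ rather than merely $r_g$ odd — but FP-dimension $p^{r_g/2} = \sqrt p = p^{1/2}$ pins down $r_g = 1$ exactly — and, conversely, that $r_g = 1$ for an isometry of a nondegenerate quadratic form over $\F_p$ with $p$ odd genuinely gives an orthogonal reflection $r_a$ (this uses that a codimension-one fixed subspace $A^{T_g}$ is nondegenerate, so its orthogonal complement is an anisotropic line $\F_p a$, and $T_g$ acts on it by $-1$ since $\det T_g = \pm 1$ and $T_g \neq \id$; in characteristic $\neq 2$ this is precisely $r_a$). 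A secondary subtlety is making sure the identification $\operatorname{Pic}(\cB_e) = O(A,Q)$ is compatible with the assignment $g \mapsto \cB_g \mapsto T_g$ appearing in Lemma~\ref{Lemma:dimensions in B_g}, i.e., that $T$ is the same as $\rho$; this is immediate from how $T_g$ is defined (it is the isometry classifying the invertible module category $\cB_g$), but it should be stated explicitly so the equivalence reads cleanly.
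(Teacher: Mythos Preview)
Your proposal is correct and follows essentially the same approach as the paper's proof: both use Lemma~\ref{Lemma:dimensions in B_g} to translate condition~\eqref{condition 1} into injectivity of $g\mapsto T_g$ and condition~\eqref{condition 2} into $G=\langle S\rangle$ where $S=\{g:\operatorname{rk}(\id_A-T_g)=1\}$. The only point the paper makes more explicit is that, in showing condition~\eqref{condition 2} from $G=\langle S\rangle$, one needs the transitivity of the $\Inv(\cB_e)$-action on $\Irr(\cB_g)$ to conclude that \emph{every} simple $Y\in\cB_g$ (not just some) embeds in $Z\otimes X_1\otimes\cdots\otimes X_l$ for suitable $Z\in\Inv(\cB_e)$ and $X_i\in\cB_{g_i}$ with $g_i\in S$; you gesture at this with ``after tensoring by invertibles of $\cB_e$'' but should state it cleanly.
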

\begin{proof}
Let $\cB_e=\Vc_{(A,Q)}$, where $(A,Q)$ is the associated quadratic space over $\F_p$. The element $T_g\in O(A,Q)=\Pic{\cB_e}$ is trivial if and only if $\cB_g$ contains an invertible object. Then the group homomorphism \[G\to \operatorname{Pic}(\cB_e),\quad  g\mapsto [\cB_g],\] is injective if and only if condition \eqref{condition 1} in the definition of reflection fusion category holds.

Now, let \[S=\{g\in G: T_g\in O(A,Q) \text{ is a reflection}\}.\] We will see that \[S=\{g\in G: \cB_g \text{ contains a simple object with } \FPdim(X)=\sqrt{p}\}.\]  

It follows from Lemma \ref{Lemma:dimensions in B_g} that the objects in $\cB_g$ have $\operatorname{FP}$-dimension $\sqrt{p}$ if and only if $\dim_{\F_p}(\operatorname{Im}(\id_A-T_g))=1$. Since $T_g\in O(A,Q)$, the condition $\dim_{\F_p}(\operatorname{Im}(\id_A-T_g))=1$ is equivalent to being a reflection. In conclusion, $T_g$ is a reflection if and only if $\cB$ has a simple object of $\operatorname{FP}$-dimension $\sqrt{p}$.

We will see  that $G=\langle S\rangle$ if and only if condition \eqref{condition 2} in the definition of reflection fusion category holds. Let $x\in G$ and $Y\in \cB_x$ be a simple object. If condition \eqref{condition 2} holds, there exist $Z\in \cB_e$ and $X_i\in \cB_{g_i}$, with $g_i\in S$, such that $Y\subset Z\otimes X_1\otimes\cdots \ot X_l$. Hence $\cB_{g_1}\otimes \cdots \ot\cB_{g_l}\subset \cB_x$,  and $g_1\cdots g_l=x$. Thus $G=\langle S\rangle$. Conversely, assume that $G=\langle S\rangle$.  Let $x\in G$ and $Y\in \cB_x$ be a simple object. Thus there are $g_i\in S$ such that $x=g_1\ldots g_l$. Let $X_i\in \cB_{g_i}$ be reflection objects and $Y'\subset X_{1}\ot \cdots \ot X_l\in \cB_x$ a simple object. Since $\Inv(\cB_e)$ acts transitively on the isomorphism classes of simple objects of $\cB_x$, there is $Z\in \Inv(\cB_e)$ such that $Y\cong Z\otimes Y'$, hence \[Y\cong Z\ot Y'\subset Z\ot X_1\ot \cdots \ot X_l,\] that is, condition \eqref{condition 2} holds.
%Now, it follows from Lemma \ref{Lemma:reflection objects} that $G$ is a reflection group if and only if $\cB$ is a reflection fusion category.
\end{proof}

\section{Irreducible reflection fusion categories}
\begin{definition}
A  $G$-crossed fusion category $\cB=\oplus_{g\in G}\cB_g$ will be called \emph{irreducible} if $\cB_e$ does not contain non-trivial $G$-invariant fusion subcategories.
\end{definition}

\begin{proposition}

Let $\cB$ be an irreducible  $G$-crossed category with $G\ne 1$. The following conditions are equivalent: 
\begin{enumerate}
    \item $\FPdim(\cB_e)=p^n$ ($p$ an odd prime), $\Inv(\cB)=\Inv(\cB_e)$, and $\cB$ is generated by objects of $\operatorname{FP}$-dimension $1$ and $\sqrt{p}$.
    \item  $\cB$ is a reflection category.
\end{enumerate}

\end{proposition}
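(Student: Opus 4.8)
The plan is to prove both implications, using Theorem~\ref{Thm:Def reflec cat} to translate the condition ``$G$ is a reflection group in $\Pic{\cB_e}$'' into the language of reflection fusion categories, and then to handle the word ``irreducible'' on both sides via Galois descent (Proposition~\ref{lemma:descent}) and the classification (Theorem~\ref{Th:classification irr othogonal reflection groups}).

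\textbf{From (2) to (1).} Assume $\cB$ is a reflection category. Since $\cB_e$ is a non-degenerate pointed braided fusion category with $\Inv(\cB_e)$ an elementary abelian $p$-group (of rank $n$, say), we have $\FPdim(\cB_e)=|\Inv(\cB_e)|=p^n$. Conditions~\eqref{condition 1} and~\eqref{condition 2} of the definition of reflection fusion category give exactly $\Inv(\cB)=\Inv(\cB_e)$ and tensor generation by invertibles together with objects of $\FP$-dimension $\sqrt p$; by Lemma~\ref{Lemma:dimensions in B_g} every simple object of $\cB$ has $\FP$-dimension $|\operatorname{Im}(\id_A-T_g)|^{1/2}$, which is $\sqrt{|\Inv(\cB_e)/\ker(\id_A-T_g)|}$, hence a power of $\sqrt p$; combined with generation this forces all simples of the generating set to have dimension $1$ or $\sqrt p$, and one checks that no other dimensions can occur among the generators. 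Thus (1) holds. (Strictly, one must also note $\Inv(\cB_e)$ being elementary abelian is equivalent to $\FPdim(\cB_e)=p^n$ with $\cB_e$ pointed non-degenerate having invertibles forming a group in which every element has order $p$; irreducibility of $\cB_e$ as a $G$-module is used to rule out a proper subgroup of exponent dividing $p$ but larger order — actually the hypothesis that $\Inv(\cB_e)$ is an elementary abelian $p$-group is built into the definition, so this is immediate.)

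\textbf{From (1) to (2).} Assume (1). First, $\FPdim(\cB_e)=p^n$ together with $\cB_e$ pointed forces $|\Inv(\cB_e)|=p^n$. I claim $\Inv(\cB_e)$ is elementary abelian: since $\cB$ is irreducible, $\cB_e$ has no nontrivial $G$-invariant fusion subcategory; the subgroup $\{a\in\Inv(\cB_e): pa=0\}$ is a characteristic (hence $G$-invariant) subgroup, and it is nontrivial since $\Inv(\cB_e)$ is a $p$-group, so it must be all of $\Inv(\cB_e)$. Hence $\cB_e\cong\Vc_{(A,Q)}$ for a quadratic space $(A,Q)$ over $\F_p$ as in Subsection~\ref{Subsection:braided pointed fusion cat}. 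Now apply Theorem~\ref{Thm:Def reflec cat}: it suffices to show that under (1) the group $G$ maps injectively into $\Pic{\cB_e}=O(A,Q)$ onto a reflection group. Condition $\Inv(\cB)=\Inv(\cB_e)$ gives injectivity (as in the proof of Theorem~\ref{Thm:Def reflec cat}, $T_g=\id$ iff $\cB_g$ has an invertible object). For the generation: by Lemma~\ref{Lemma:dimensions in B_g} an object of $\cB_g$ has dimension $\sqrt p$ iff $\dim_{\F_p}\operatorname{Im}(\id_A-T_g)=1$, i.e. $T_g$ is a reflection; since $\cB$ is generated by objects of dimension $1$ and $\sqrt p$, the argument in the proof of Theorem~\ref{Thm:Def reflec cat} shows $G$ is generated by the set $S$ of such $g$, i.e. by reflections. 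Hence $G$ is a reflection group in $\Pic{\cB_e}$, and Theorem~\ref{Thm:Def reflec cat} gives that $\cB$ is a reflection fusion category.

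\textbf{Main obstacle.} The only non-formal point is the deduction, in the (1)$\Rightarrow$(2) direction, that $\Inv(\cB_e)$ is \emph{elementary} abelian rather than merely a $p$-group — this is where irreducibility of $\cB$ enters essentially, via the $G$-invariance of the characteristic subgroup of $p$-torsion. Everything else is a direct translation through Lemma~\ref{Lemma:dimensions in B_g} and Theorem~\ref{Thm:Def reflec cat}, together with the standard fact that a pointed fusion category of Frobenius--Perron dimension $p^n$ has exactly $p^n$ invertible simple objects. I would present the proof as two short paragraphs mirroring the two implications, citing Theorem~\ref{Thm:Def reflec cat} as the bridge and spelling out only the elementary-abelian step in detail.
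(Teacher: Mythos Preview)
Your proposal has two genuine gaps in the $(1)\Rightarrow(2)$ direction, both stemming from a misreading of what condition~(1) actually assumes.

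\textbf{Pointedness of $\cB_e$.} Condition~(1) only says $\FPdim(\cB_e)=p^n$; it does \emph{not} say $\cB_e$ is pointed. You write ``$\FPdim(\cB_e)=p^n$ together with $\cB_e$ pointed forces $|\Inv(\cB_e)|=p^n$'', assuming pointedness without justification. This is the first place where irreducibility must be used, and the paper does so: by \cite[Theorem~8.28]{ENO}, a fusion category of prime-power Frobenius--Perron dimension has nontrivial $\Inv$; the maximal pointed subcategory of $\cB_e$ is $G$-invariant (the $G$-action permutes invertibles), hence by irreducibility equals $\cB_e$. Only then does one know $\Irr(\cB_e)=\Inv(\cB_e)$ and $|\Inv(\cB_e)|=p^n$. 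Your characteristic-subgroup argument for elementary abelianness is fine, but it only makes sense \emph{after} pointedness is established.

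\textbf{Non-degeneracy of $\cB_e$.} The definition of a reflection fusion category requires $\cB_e$ to be non-degenerate as a braided category, and you never address this. The paper handles it by observing that the M\"uger center of $\cB_e$ is preserved by every braided autoequivalence, hence is a $G$-invariant fusion subcategory; irreducibility then forces $\cB_e$ to be either non-degenerate or symmetric, and the symmetric case is excluded using $\Inv(\cB)=\Inv(\cB_e)$ together with $G\neq 1$.

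So your claim that ``the only non-formal point is the deduction \ldots\ that $\Inv(\cB_e)$ is elementary abelian'' is wrong: there are two prior, more substantial uses of irreducibility that you have skipped. Also, the references in your plan to Galois descent (Proposition~\ref{lemma:descent}) and the classification theorem (Theorem~\ref{Th:classification irr othogonal reflection groups}) are irrelevant here and should be removed.
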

\begin{proof}
(2) $\implies $(1) is trivial, for (1) $\implies$ (2) we  only need to see that $\cB_e$ is a non-degenerate pointed fusion category with $\Inv(\cB_e)$ an elementary abelian $p$-group.

As a consequence of \cite[Theorem 8.28]{ENO},  we have that $\Inv(\cB_e)$ is non-trivial.  Since $\cB$ is irreducible, we have that $\Irr(\cB_e)=\Inv(\cB_e)$, with  $\Inv(\cB_e)$ an elementary abelian $p$-group, where $p$ is an odd prime. Since the fusion subcategory generated by all $X\in \cB_e$ such that $c_{Y,X}c_{X,Y}=\id_{X\ot Y}$ is invariant for any braided auto-equivalence, we have that $\cB_e$ is either non-degenerate or symmetric, with $p$ invertible objects. But in this case $\cB$ $\cB$ will have invertible objects outside $\cB_e$ unless $G=1$, since the category of vector spaces is not invertible as a module category over $\cB_e$.  
\end{proof}

The aim of this section is to classify irreducible reflection fusion categories up to twist using the results of Appendix \ref{Appendix:Etingof} and the classification of irreducible reflection groups over finite fields, \cite{MR603578}.

If $\cB=\oplus_{g\in G}\cB_g$ is an irreducible reflection fusion category with $\cB_e=\Vc_{(A,Q)}$, it follows from Theorem \ref{Thm:Def reflec cat} and \cite[Theorem 7.12]{ENO3} that $G\subset O(A,Q)=\Pic{\Vc_{(A,Q)}}$ is an irreducible orthogonal reflection group. 

Conversely, it follows from Theorem \ref{main2} that in order to classify all irreducible  $G$-crossed extensions of $\Vc_{(A,Q)}$ up to twisting, we can proceed as follows:

\begin{enumerate}
    \item Compute all orthogonal irreducible reflection groups $G\subset O(A,Q)$.
    \item Compute $H^2(G,A)$ and all elements $L \in H^2(G,A)$ such that $\beta(L,L)=1\in H^4(G,\ku^\times)$, where $\beta(L,L)$ is the composition of the bicharacter $\beta$ associated with $Q$ with the cup product in the group cohomology of $G$.
\end{enumerate}For each such pair $(G,L)$ there is then a unique $G$-crossed extension up to twisting by an element of $H^3(G,\ku^\times)$. 

Using that $H^n(G,A)=0$ for all $n\geq 1$ if $(|G|,|A|)=1$ and Theorem \ref{main2}, we obtain:

\begin{corollary}\label{cor:non-modular case}
Let $(A,Q)$ be a  quadratic space over $\F_p$ and $G\subset O(A,Q)$ a reflection group where $H^2(G,A)=0$. Then up to twisting there is a unique reflection fusion category
 $\cB=\oplus_{g\in G}\cB_g$ where $\cB_e=\Vc_{(A,Q)}$ associated to $G$. In particular, this is so  if $p\nmid |G|$.
\end{corollary}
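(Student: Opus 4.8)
The plan is to derive Corollary \ref{cor:non-modular case} directly from Theorem \ref{main2} (the existence/uniqueness statement of the Appendix) together with the vanishing of the relevant obstruction and cohomology groups. First I would recall the setup: given the irreducible orthogonal reflection group $G \subset O(A,Q)$, Theorem \ref{main2} produces a canonical $G$-crossed extension of $\Vc_{(A,Q)}$ for every choice of a class $L \in H^2(G,A)$ satisfying the quadratic-cup-product condition $\beta(L,L) = 0 \in H^4(G,\ku^\times)$, and asserts that these exhaust all such extensions up to twisting by $H^3(G,\ku^\times)$. So the whole corollary reduces to the assertion that, under the hypothesis $H^2(G,A)=0$, there is exactly one admissible datum $(G,L)$, namely $L=0$, and that the associated extension is a reflection fusion category.

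Next I would spell out the two points that make this work. The first is immediate: if $H^2(G,A)=0$ then the only class is $L=0$, which trivially satisfies $\beta(0,0)=0$, so by Theorem \ref{main2} there is a unique $G$-crossed extension of $\Vc_{(A,Q)}$ up to twisting. The second point is that this extension is genuinely a reflection fusion category in the sense of the definition, i.e. it satisfies conditions \eqref{condition 1} and \eqref{condition 2}. But this is exactly the content of Theorem \ref{Thm:Def reflec cat}: since $G$ is a reflection group inside $\operatorname{Pic}(\Vc_{(A,Q)}) = O(A,Q)$, the corresponding $G$-crossed category is a reflection fusion category. (Faithfulness of the grading, hence injectivity of $g \mapsto [\cB_g]$, holds because $G$ acts faithfully on $A$.) Conversely any reflection fusion category with $\cB_e = \Vc_{(A,Q)}$ realizing $G$ arises this way, again by Theorem \ref{Thm:Def reflec cat} combined with \cite[Theorem 7.12]{ENO3}, so the classification is complete.

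Finally, for the ``in particular'' clause I would invoke the standard fact, recalled just before the corollary, that $H^n(G,A) = 0$ for all $n \geq 1$ whenever $(|G|,|A|) = 1$: since $A$ is a $p$-group, $p \nmid |G|$ forces $H^2(G,A) = 0$ (indeed $H^4(G,A)=0$ as well, so the obstruction $O_4$ also vanishes automatically), and the first part of the corollary applies. The only mild subtlety — and the step I would be most careful about — is making sure that ``unique up to twisting'' is stated with the right torsor structure: once $L=0$ is the unique admissible class, the liftings to a full $\underline{\underline{G}} \to \underline{\underline{\operatorname{Pic}}}(\Vc_{(A,Q)})$ form a torsor over $H^3(G,\ku^\times)$, which is precisely the twisting ambiguity recorded in Proposition-style statements above; there is no further choice. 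I do not expect any serious obstacle here — the corollary is essentially a bookkeeping consequence of Theorem \ref{main2} and Theorem \ref{Thm:Def reflec cat} — so the proof should be just a few lines.
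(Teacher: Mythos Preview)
Your proposal is correct and follows essentially the same route as the paper: the paper's proof is a one-liner invoking Theorem \ref{main2} together with the coprimality fact $H^n(G,A)=0$ when $(|G|,|A|)=1$, and you unpack exactly this, with the additional (and appropriate) observation via Theorem \ref{Thm:Def reflec cat} that the resulting $G$-crossed extension is indeed a reflection fusion category. One small slip: you write ``irreducible orthogonal reflection group'' in the first paragraph, but the corollary does not assume irreducibility --- this is harmless for the argument, just drop the word.
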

\qed

\subsection{Irreducible reflection fusion categories where $p \nmid |G|$}

The case $G=1$ is trivial, so we will omit it and assume $G\ne 1$. If $\cB=\oplus_{g\in G}\cB_g$ is an irreducible reflection category with $\Inv(\cB_e)=\F_p$, then $G=\Z/2\Z$  and $G$ acts on $\F_p$ by inversion (since $\Pic{\cB_e}=\{\id, -\id\}$). It follows from Lemma \ref{Lemma:dimensions in B_g}  that $\cB_{1}$ has only one simple object up to isomorphism. Hence $\cB$ is a Tambara-Yamagami fusion category (see \cite{TY}) with a  $\Z/2\Z$-crossed structure (see \cite[Section 5]{GNN} for a complete description).

The next theorem classifies irreducible reflection fusion categories $\cB=\oplus_{g\in G}\cB_g$, where  $\Inv(\cB_e)$ is $p^n$ and $p \nmid |G|$.

\begin{theorem}\label{Theorem: classification relfection G-invariant bilineat form}
Let $p$ be an odd prime. Every  irreducible reflection fusion category $\cB=\oplus_{g\in G}\cB_g$  with $|\Inv(\cB_e)|=p^n$ and $p\nmid |G|$ is a reflection category of Coxeter type, namely its reflection group is a twisted form of 

\begin{table}[!htbp]

\begin{tabular}{ |p{2cm}|p{2cm}|p{6cm}|  }
 \hline
 $G$ & $\operatorname{dim}(V)$ &  Conditions on $p$ \\
 \hline
 $A_{n,p}$  &  $n\geq 1$   & $p\not{|} \ (n+1)!$ \\
 \hline
   $B_{n,p}$  &  $n\geq 3$   & $p\not{|}\  n!$\\
 \hline
   $D_{n,p}$  &  $n\geq 4$   & $p\not{|}\  n!$\\
 \hline
    $E_{6,p}$  &  $6$   & $p>5$\\
 \hline
   $E_{7,p}$  &  $7$   & $p>7$ \\
 \hline
    $E_{8,p}$  &  $8$   & $p>7$\\
 \hline
    $F_{4,p}$  &  $4$   & $p>3$\\
 \hline
     $H_{3,p}^\zeta$    &  $3$   & $p>5$, $p^2-1\equiv 0 \mod 5$ \\
 \hline    $H_{4,p}^\zeta$  &  $4$   &  $p>5$, $p^2-1\equiv 0 \mod 5$ \\
 \hline   $I_{2,p}(d)$  &  $2$   &    $d| p-1  $ if $I_{2,p}(d)\subset O^{+}(2,p)$, and $d|p+1$  if $I_{2,p}(d)\subset O^{-}(2,p)$. \\
 \hline
\end{tabular}

\caption {Irreducible  reflection groups over $\Fp$ with $p \nmid |G|.$} \label{tab:coxeter non-modular}
\end{table}

\end{theorem}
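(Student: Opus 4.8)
The plan is to deduce this theorem from two inputs: the classification of irreducible orthogonal reflection groups over $\F_p$ given in Theorem \ref{Th:classification irr othogonal reflection groups}, and Corollary \ref{cor:non-modular case}, which says that when $H^2(G,A)=0$ (in particular whenever $p\nmid|G|$) there is, up to twisting by $H^3(G,\ku^\times)$, a unique reflection fusion category with $\cB_e=\Vc_{(A,Q)}$ attached to each irreducible orthogonal reflection group $G\subset O(A,Q)$. So the content of the theorem is really: among the groups listed in Theorem \ref{Th:classification irr othogonal reflection groups}, determine exactly which ones can occur with $p\nmid|G|$, and check that the resulting list is precisely Table \ref{tab:coxeter non-modular}.

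First I would recall that by Theorem \ref{Thm:Def reflec cat} and \cite[Theorem 7.12]{ENO3}, $\cB$ being an irreducible reflection fusion category forces $G$ to be an irreducible orthogonal reflection group in $O(A,Q)$, so Theorem \ref{Th:classification irr othogonal reflection groups} applies verbatim. Then I would go through the three cases \ref{reduction mo p2}, \ref{fam Sn+2}, \ref{orthogonal-type2} of that theorem under the extra hypothesis $p\nmid|G|$. Case \ref{fam Sn+2} is $\overline A_{n,p}\cong\mathbb S_{n+2}$ with $p\mid n+2$, so $p\mid|G|=(n+2)!$, which is excluded; this family disappears. Case \ref{orthogonal-type2}, the full orthogonal groups $O(V,Q)$, $O_1(V,Q)$, $O_2(V,Q)$: here one uses that $|O^\pm(m,q)|$ is divisible by $p$ once $\dim V\ge 2$ (the order formulas in \cite{Grove-Book}, e.g. a transvection-type unipotent element of order $p$ always exists, or simply $p\mid|O|$ since $O$ contains $SO$ which contains a root subgroup), so these are excluded as well except in the rank-$1$ degenerate situation, which is the trivial $G=\Z/2\Z$ case already set aside. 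This leaves exactly the Coxeter reductions of case \ref{reduction mo p2}, and for each of them I would translate "$p\nmid|G|$" into the explicit condition on $p$: for $A_{n,p}$ the relevant group is $\mathbb S_{n+1}$ so the condition is $p\nmid(n+1)!$; for $B_{n,p}$ and $D_{n,p}$ the Weyl groups have order $2^n n!$ and $2^{n-1}n!$, so $p$ odd gives $p\nmid n!$; for the exceptional $E_6,E_7,E_8,F_4,H_3,H_4$ one reads off the largest prime dividing the Weyl group order ($5$ for $E_6,F_4,H_3,H_4$; $7$ for $E_7,E_8$) to get the stated bounds $p>5$, $p>7$, etc.; and for $I_2(d)$ the group has order $2d$ so $p\nmid|G|$ becomes $p\nmid d$, which combined with the constraint $d\mid p\pm1$ from Theorem \ref{Th:classification irr othogonal reflection groups} is automatic and one just records the $O^\pm$ dichotomy. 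Matching all of this against Table \ref{tab:coxeter non-modular} completes the proof, with the uniqueness-up-to-twist statement supplied by Corollary \ref{cor:non-modular case} since $p\nmid|G|$ implies $H^2(G,A)=0$.

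The main obstacle, and the only place requiring genuine care rather than bookkeeping, is the exceptional cases: one must be sure that the reduction mod $p$ of the Coxeter group $W$ is still irreducible, faithful, nondegenerate, and has order equal to $|W|$ (so that "$p\nmid|W|$" is the same as "$p\nmid|G_p|$") for exactly the range of $p$ claimed — this is where the subtleties flagged in the remarks after the $H_{i,p}$ discussion live (e.g. $H_{3,5}\cong O_1(3,5)$, $\overline E_{5,3}\cong O_2(5,3)$, the degeneracies at small primes for $E_6$ over $\F_3$ and $F_4$ over $\F_3$), so the hypothesis $p\nmid|W|$ conveniently pushes $p$ past all the bad primes and makes these coincidences irrelevant. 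I would also need the elementary fact, used implicitly, that when $p\nmid|G|$ the bilinear form coming from the Coxeter realization stays nondegenerate after reduction, which follows because its Gram determinant (a fixed integer, the discriminant of the root system) is then a unit in $\F_p$; this is exactly why the conditions in Table \ref{tab:coxeter non-modular} are strictly stronger than those in Table \ref{tab:coxeter}. Everything else is a direct comparison of two finite lists.
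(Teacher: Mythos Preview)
Your approach is correct and matches the paper's own proof, which simply states that the theorem ``follows immediately from Corollary \ref{cor:non-modular case} and Theorem \ref{Th:classification irr othogonal reflection groups}''; you have spelled out the case-by-case elimination that the paper leaves implicit. One small slip: the largest prime dividing $|W(F_4)|=1152=2^7\cdot 3^2$ is $3$, not $5$, which is why Table \ref{tab:coxeter non-modular} gives $p>3$ for $F_{4,p}$ rather than $p>5$.
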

%\newpage
\begin{proof}
The theorem follows immediately from Corollary \ref{cor:non-modular case} and Theo\-rem \ref{Th:classification irr othogonal reflection groups}.
\end{proof}
\begin{remark}
Since $A_3=D_3$, we have excluded $D_3$ from Table \ref{tab:coxeter non-modular}.
\end{remark}

\subsection{Reflection fusion categories where $p | \ |G|$}

As we saw in Coro\-llary \ref{cor:non-modular case}, reflection fusion categories where  $p$ does not divide the order of $G$ are classified up to twisting by an element of $H^3(G,\ku^\times)$ by an orthogonal reflection group  $G\subset O(V,Q)\cong \Pic{\cB_e}$. As we will see, even if $p$ divides $|G|$, in almost all cases of irreducible reflection groups we have that $H^2(G,V)=0$ and thus Corollary \ref{cor:non-modular case} applies.

The following lemma will be useful to compute $H^2(G,V)$.
\begin{lemma}\label{Lemma:LHS}
Let $G$ be a group and $A$ a $G$-module. If there is a normal subgroup $N\subset G$ such $H^i(N,A)=0$ for all $i\leq n$ then $H^n(G, A)=0$.
\end{lemma}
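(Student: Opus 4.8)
The statement is a standard consequence of the Lyndon--Hochschild--Serre (LHS) spectral sequence, so the plan is to invoke that spectral sequence for the extension $1\to N\to G\to G/N\to 1$ with coefficients in $A$. Recall that it has $E_2$-page $E_2^{p,q}=H^p\big(G/N,H^q(N,A)\big)$ and converges to $H^{p+q}(G,A)$. The hypothesis says that $H^q(N,A)=0$ for all $q\le n$, so every term $E_2^{p,q}$ with $q\le n$ vanishes. In particular, for $p+q=n$ the only potentially nonzero entry would need $q\le n$, hence all of $E_2^{p,q}$ with $p+q=n$ vanish; since the $E_\infty$-page terms on the diagonal $p+q=n$ are subquotients of the corresponding $E_2$ terms, they all vanish as well, and $H^n(G,A)$, which is filtered with these as associated graded pieces, must be $0$.

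First I would state precisely which version of the LHS spectral sequence is being used and cite a standard reference (e.g. the one already in the bibliography, or Weibel's homological algebra book), noting that $N$ is normal in $G$ so the spectral sequence applies with the given $G$-module $A$ restricted appropriately. Then I would spell out the one-line argument above: fix the total degree $n$, observe that on the line $p+q=n$ every bidegree has $q\le n$, apply the hypothesis to conclude $E_2^{p,q}=0$ there, and pass to $E_\infty$ via the standard fact that $E_{r+1}^{p,q}$ is a subquotient of $E_r^{p,q}$. Finally I would recall that $H^n(G,A)$ admits a finite filtration whose associated graded is $\bigoplus_{p+q=n}E_\infty^{p,q}=0$, whence $H^n(G,A)=0$.

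\textbf{Main obstacle.} There is essentially no obstacle here: the result is a routine application of a classical tool, and the only thing to be careful about is citing the spectral sequence in the exact form needed (cohomological LHS for an arbitrary, not necessarily finite, group $G$ and a not necessarily trivial $G$-module $A$) and making sure the vanishing of the relevant $E_2$-entries is correctly read off from the hypothesis $H^i(N,A)=0$ for $i\le n$. One should also note that no finiteness or triviality assumption on $A$ is required, so the lemma is stated at the right level of generality for its later use in computing $H^2(G,V)$ with $V$ the (nontrivial) reflection module.
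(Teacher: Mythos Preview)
Your proposal is correct and follows essentially the same approach as the paper: invoke the Lyndon--Hochschild--Serre spectral sequence for $1\to N\to G\to G/N\to 1$, observe that the hypothesis kills all $E_2^{p,q}$ with $q\le n$, and conclude $H^n(G,A)=0$. The paper's proof is just a terser version of what you wrote.
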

\begin{proof}
Let us use the short exact sequence \[1\to N\to G\to G/N\to 1.\] The associated Lyndon–Hochschild–Serre spectral sequence has $E_2$-page \[E^{p,q}_2 = H^p(G/N,H^q(N,A)),\] and it converges to the group cohomology $E^n_\infty = H^n(G,A)$. Since  $H^q(N, A)=0$ for all $q\leq n$, we have that  $E^{p,q}_2 =0$ for all $q\leq n$. Thus $0=E^n_\infty=H^n(G,A)$.
\end{proof}
\begin{lemma}\label{Lemma:H^2 =0}
Let $G\subset O(V,Q)$ be an irreducible orthogonal reflection group over $\F_p$. Thus $H^2(G,V)=0$, except in the following cases:  $O_2(3,p)$ with $p>3$, $O_2(5,3)$, $O_2(7,3)$ and  $O^{-}_2(8,3)$.
\end{lemma}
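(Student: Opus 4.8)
The plan is to run through the classification of irreducible orthogonal reflection groups over $\F_p$ given in Theorem \ref{Th:classification irr othogonal reflection groups}, and for each family verify $H^2(G,V)=0$ unless $p\mid |G|$ forces otherwise. The obvious first reduction is: if $p\nmid |G|$ then $H^2(G,V)=0$ automatically, so only the cases where $p$ divides the group order need attention. Thus I would first tabulate, for each entry of Table \ref{tab:coxeter}, for which primes $p$ one has $p\mid|G|$; these are finitely many primes for the exceptional types ($E_n$, $F_4$, $H_3$, $H_4$) and the primes dividing $n!$ (or a small variant) for the classical series and for $O^\pm(2n,p)$, $O(2n+1,p)$. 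This cuts the problem down to a manageable list.

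For the bulk of the remaining cases I would use Lemma \ref{Lemma:LHS}: if $G$ contains a normal subgroup $N$ with $H^i(N,V)=0$ for $i\le 2$, then $H^2(G,V)=0$. The natural candidates for $N$ are derived subgroups or the rotation subgroups $\Omega$ (the kernel of the spinor norm), since for $V$ an absolutely irreducible $\F_p G$-module of large enough dimension the low-degree cohomology of the finite groups of Lie type $\Omega^\pm(n,p)$ (equivalently $\mathrm{Sp}$, $\mathrm{SL}$ in small rank via exceptional isomorphisms) vanishes — this is exactly where the acknowledgement to Daniel Nakano on ``cohomology of finite Chevalley groups'' comes in. Concretely I would invoke the known results that $H^1(\Omega,V)=0$ and $H^2(\Omega,V)=0$ for the defining-characteristic natural module in the relevant ranks, reducing each Weyl/Coxeter group case to its image-in-$O(V,Q)$ and then to the normal rotation subgroup. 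For the symmetric-group cases $A_{n,p}$ and $\overline A_{n,p}\cong \mathbb S_{n+2}$ with $p\mid(n+1)$ or $p\mid (n+2)$, I would instead use the classical computation of $H^2(\mathbb S_m,V)$ for $V$ the (reduced) permutation module over $\F_p$, which is known to vanish for $m$ sufficiently large; the alternating subgroup $\mathbb A_m$ (or its derived subgroup) serves as the normal subgroup in Lemma \ref{Lemma:LHS}.

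The exceptional and small-rank cases are the source of the exceptions in the statement, so these I would handle by direct computation (or by citing tables of cohomology of finite simple groups): the dihedral groups $I_{2,p}(d)$ with $p\mid d$, the groups $O(2n+1,p)$ and $O^\pm(2n,p)$ in very small rank — in particular $O_2(3,p)$ (which is $\cong \mathrm{PSL}_2(p)$ or $\mathrm{PGL}_2(p)$ acting on its $3$-dimensional module), $O_2(5,3)$, $O_2(7,3)$, $O_2^-(8,3)$ — where $H^2(G,V)$ turns out to be nonzero. For these I would exhibit the nonvanishing explicitly (e.g.\ for $O_2(3,p)\cong \mathrm{PSL}_2(\F_p)$ acting on the symmetric square of the standard module, compute $H^2$ via the known cohomology of $\mathrm{SL}_2$), and conversely verify that $O_1(2n+1,p)$ (which contains $-\id$, hence has a central $\Z/2$ whose Sylow-$2$ argument kills the relevant cohomology) and all the larger-rank orthogonal groups have $H^2(G,V)=0$. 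The main obstacle will be assembling complete and correctly-cited vanishing statements for $H^2$ of the finite orthogonal groups on their natural modules in defining characteristic across all the needed ranks — i.e.\ pinning down exactly where the low-rank exceptions stop — rather than any single hard computation; the rest is bookkeeping over the classification table combined with the spectral-sequence reduction of Lemma \ref{Lemma:LHS}.
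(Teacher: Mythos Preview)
Your high-level plan—run through the classification, dismiss the coprime cases, and for the rest apply Lemma~\ref{Lemma:LHS} with a well-chosen normal subgroup—is exactly what the paper does. But your proposed execution for the Coxeter types has a real gap: you suggest ``reducing each Weyl/Coxeter group case to its image in $O(V,Q)$ and then to the normal rotation subgroup $\Omega$'', citing cohomology of finite Chevalley groups. This does not work, because $\Omega(V,Q)$ is \emph{not} a subgroup of the Weyl group (it is typically far larger), so it cannot serve as the normal $N$ in Lemma~\ref{Lemma:LHS}. The Chevalley-group vanishing results you allude to are about $H^*(\Omega,V)$, not about $H^*(W,V)$ for a Weyl group $W$, and there is no restriction or transfer argument bridging the two in the direction you need.

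The paper's remedy is much simpler than anything involving Lie-type cohomology: for $B_n$, $E_7$, $E_8$, $F_4$ one has $-\id_V\in G$, and for $D_n$ the diagonal sign-change subgroup $A\cong(\Z/2\Z)^{n-1}$ is normal. In each case $N$ is a $2$-group acting on the odd-order module $V$, so $H^i(N,V)=0$ for all $i\ge 0$ and Lemma~\ref{Lemma:LHS} applies immediately. This same $-\id_V$ trick also handles $O(V,Q)$ and $O_1(V,Q)$ in one line. For the symmetric-group cases $A_{n,p}$ and $\overline A_{n,p}$ your suggestion to use $\mathbb A_m$ as the normal subgroup is not helpful (its cohomology with Specht-module coefficients is no easier); the paper instead cites the direct computations of Burichenko--Kleshchev--Martin. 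The case $E_{6,5}$ (the one prime where $p\mid|W(E_6)|$ and $-\id\notin W(E_6)$) needs a separate ad hoc argument via a well-placed $\mathbb S_5$ and Shapiro's lemma. Finally, your worry about dihedral groups $I_{2,p}(d)$ with $p\mid d$ is misplaced: since $d\mid p\pm 1$, one always has $\gcd(p,2d)=1$. Where your plan \emph{is} on target is the orthogonal-type groups $O_2(V,Q)$: there the paper does exactly what you propose, taking $N=\Omega(V,Q)$ and invoking the literature (Kusefoglu, Burichenko) on $H^2(\Omega,V)$, which is precisely where the listed exceptions arise.
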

\begin{proof}
We will start case by case with the groups of Theorem  \ref{Th:classification irr othogonal reflection groups}.

(a) \emph{Reduction mod $p$ of finite Coxeter groups.} For the case $A_{n-1,p}$, we have that $V$ is the irreducible Specht module associated to the partition $(n-1,1)$. Thus by \cite[Theorem 4.1]{BURICHENKO}, $H^2(\mathbb{S}_n,V)=0$.

If $G$ is one of the following groups $B_n, E_7, E_8$ and $F_4$, we have that $-\id_V\in G$. Hence $H^2(G,V)=0$  by Lemma \ref{Lemma:LHS}.

In the $D_{n,p}$ case, the reflection group has the form $A\rtimes \mathbb{S}_n,$ where $A$ is the group of automorphisms $u$ of the vector space $V=\F_p^n$ such that  $u(e_i)=(-1)^{\nu_i}e_i$ with $\prod_{i=1}^n(-1)^{\nu_i}=1$. The group $A$ is isomorphic to $(\Z/2\Z)^{n-1}$. Hence again \[H^0(A,V)=V^{A}=0 \text{ and }H^i(A,V)=0,\] for all $i>0$. It follows from Lemma \ref{Lemma:LHS} that $H^2(A\rtimes \mathbb{S}_n,V)=0$.

If $p>5$ and $G=E_{6,p}$, we have that $H^2(G,V)=0$ since $(|G|,|V|)=0$. For $G=E_{6,5}$, consider  $\mathbb{S}_5\subset G$, that corresponds to the copy of $A_4$ in $E_6$ which is spanned by the roots $\epsilon_i-\epsilon_{i+1},$  $i=3,4,5,6,7$, in \cite[page 94]{Kane-Book} presentation of $E_6$. Let $g\in \mathbb{S}_5$ be an element of order 5. Hence the inclusions $\Z/5\Z\cong \langle g\rangle \subset \mathbb{S}_5\subset E_{6,5}$, define the group homomorphisms 

\[
\xymatrix{
H^2(E_{6,5},V) \ar[dr]_{j} \ar[rr]^{f}&& H^2(\Z/5\Z,V) \\
&H^2(\mathbb{S}_5,V), \ar[ru]_{h}& 
}
\] 
where $f$ and $h$ are injective ( $\langle g\rangle$ is 5-Sylow subgroup and $V$ is a $5$-group). Note that $V|_{\mathbb{S}_5}=P\oplus \F_5$, where $P$ is the permutation representation and $\F_5$ is the trivial representation. Now, by Shapiro's lemma, $H^2(\mathbb{S}_5,P)=H^2(\mathbb{S}_4, \F_5)=0$, and  $H^2(\mathbb{S}_5, \F_5)=0$ as well (see \cite{BURICHENKO}). Thus $H^2(\mathbb{S}_5,V)=0$, so $f$ is an injective map which factors through the zero space, that is, $f=0$ and then $H^2(E_{6,5},V)=0$.

(b) \emph{$\overline{A}_{n-2,p}\cong \mathbb{S}_{n}$, with $p| n$ and $n>4$.} Following the notation in \cite{BURICHENKO}, we have that $V$ is the irreducible module $D^{(n-1,1)}$. Thus, it follows from \cite[Proposition 5.4]{BURICHENKO} that $H^2(\mathbb{S}_n,V)=0$ for all $n>3$.

(c) \emph{Groups of orthogonal type.} Since  $-\id_V\in O_1(V,Q)\subset O(V,Q)$  and $V$ has odd order, using Lemma \ref{Lemma:LHS} we have that $H^2(O(V,Q),V)=H^2(O_1(V,Q),V)=0$.

By \cite[Section 3.1 ]{MR2074400}, we have  that excluding the cases $O(3,3)$ and $O^{+}(4,3)$,  the derived subgroup $\Omega(V,Q)$ of $O(V,Q)$, is a normal subgroup of $O_2(V,Q)$ of index two. Hence we have an exact sequence \[1\to \Omega(V,Q)\to O_2(V,Q)\to \Z/2\Z\to 1.\]

It follows from \cite{Kusefoglu-1,Kusefoglu-2} and \cite{Burichenko-english,Burichenko-russian} 
\footnote{According to \cite{Kusefoglu-1,Kusefoglu-2}, $H^2(\Omega(V,Q),V)=0$ for $\dim(V)>3$, except in the cases $\Omega(V,Q)\subset O(7,3)$ and  $\Omega(V,Q)\subset O^{-}(8,3)$. However, according to \cite{Burichenko-english,Burichenko-russian}, the papers \cite{Kusefoglu-1,Kusefoglu-2} had some mistakes. In particular,  $H^2(\Omega(V,Q),V)\neq 0$ in two more cases:  $\Omega(V,Q)\subset O^{-}(4,9)$ and $\Omega(V,Q)\subset O(5,3)$.} 
that $H^2(\Omega(V,Q),V)=0$ except for the groups $\Omega(V,Q)\subset O(3,p)$ with $p>3$, $\Omega(V,Q)\subset O(5,3)$, $\Omega(V,Q)\subset O(7,3)$ and  $\Omega(V,Q)\subset O^{-}(8,3)$. Since $H^0(\Omega(V,Q),V)=0$ and $H^1(\Z/2\Z,H^1(\Omega(V,Q),V))=0$, using the Lyndon–Hochschild–Serre spectral sequence as in Lemma \ref{Lemma:LHS} we can conclude that $H^2(G,V)=0$, for every $O_i(V,Q)$, different from $O_2(3,p)$ with $p>3$, $O_2(5,3)$, $O_2(7,3)$ and  $O_2^{-}(8,3)$.

\end{proof}

\begin{lemma}\label{lemma: cup product}
We have $H^2(O_2(3,p),V)=\F_p$ for all $p>3$ and the squaring map $H^2(O_2(3,p),V)\to H^4(O_2(3,p),\F_p)$ is zero for all $p> 3$.
\end{lemma}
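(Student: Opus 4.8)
The plan is to first identify the group $O_2(3,p)$ explicitly and then compute its cohomology with coefficients in the natural $3$-dimensional module $V=\F_p^3$. The group $O(3,p)$ is, up to center, $\mathrm{PGL}_2(\F_p)$ acting on the adjoint (or symmetric-square) representation, and $O_2(3,p)$ is the index-two subgroup without $-\id_V$ in its center; concretely $O_2(3,p)\cong \mathrm{PGL}_2(\F_p)$ and $V$ is the $3$-dimensional irreducible $\mathrm{PGL}_2(\F_p)$-module $\mathrm{Sym}^2$ of the standard $2$-dimensional $\mathrm{SL}_2$-module (this is the Steinberg-adjacent module, or the adjoint module $\mathfrak{sl}_2$). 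The first task is to confirm $H^2(O_2(3,p),V)=\F_p$; I would extract this from the literature cited in the previous lemma (\cite{Kusefoglu-1,Kusefoglu-2} as corrected by \cite{Burichenko-english,Burichenko-russian}), which is precisely where the exceptional nonvanishing $H^2(\Omega(V,Q),V)\neq 0$ for $\Omega(V,Q)\subset O(3,p)$, $p>3$ was flagged, and then transfer from $\Omega(V,Q)\cong \mathrm{PSL}_2(\F_p)$ up to $O_2(3,p)\cong \mathrm{PGL}_2(\F_p)$ using the Lyndon–Hochschild–Serre spectral sequence for $1\to \Omega\to O_2\to \Z/2\to 1$ exactly as in Lemma \ref{Lemma:H^2 =0}.

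For the vanishing of the squaring map $H^2\to H^4(O_2(3,p),\F_p)$, the key observation is that the cup product $\beta(L,L)$ factors through the symmetric square: $L\cup L$ lands in $H^4(G, V\otimes V)$ and then we apply the $G$-invariant quadratic form $Q: V\to \F_p$ (equivalently the associated symmetric bilinear form $B$, which is $G$-invariant since $G\subset O(V,Q)$). Since $p>2$, we have $V\otimes V = \mathrm{Sym}^2(V)\oplus \Lambda^2(V)$ as $G$-modules and $Q$ only sees the symmetric part; moreover $L\cup L$ already lies in the image of $\mathrm{Sym}^2(V)$-coefficients by graded-commutativity of the cup product (for a degree-$2$ class, $L\cup L$ is symmetric). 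So the plan is: (i) describe $\mathrm{Sym}^2(V)$ as a $G$-module and locate the trivial summand $\F_p\cdot B^{\vee}$ corresponding to the invariant form; (ii) show that the composite $H^2(G,V)\xrightarrow{L\mapsto L\cup L} H^4(G,\mathrm{Sym}^2 V)\xrightarrow{B} H^4(G,\F_p)$ is zero.

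The main obstacle is step (ii): controlling the target $H^4(O_2(3,p),\F_p)$ and the image of the squaring map. I would try two complementary attacks. First, a restriction argument: restrict to the $p$-Sylow subgroup $P\cong \Z/p$ (since $|O_2(3,p)| = p(p^2-1)$, the $p$-part is exactly $p$), where $H^\ast(\Z/p,\F_p)$ is well understood; the squaring map on the line $H^2(\Z/p, \mathrm{Res}\, V)$ can be computed directly from the Bockstein and cup-product structure of $\Z/p$, and because $\mathrm{Res}_P V$ is a sum of Jordan blocks whose invariant form restricted to $P$ pairs complementary blocks, the self-cup-product paired against $B$ should vanish. Since $H^2(O_2(3,p),\F_p)\to H^2(P,\F_p)^{N_G(P)}$ and similarly in degree $4$ are injective (stable elements), vanishing after restriction to $P$ gives vanishing upstairs. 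The second, more structural attack: the class in $H^2(G,V)=\F_p$ corresponds (via the appendix obstruction theory) to a central extension datum whose associated $O_4$ obstruction is known to vanish when the extension is the canonical one; so one can instead argue that any nonzero $L$ gives a genuine $G$-crossed extension, forcing $\beta(L,L)=0$. I expect the Sylow-restriction computation to be the cleanest route, with the only real work being the explicit Jordan form of $\mathrm{Res}_{\Z/p} V$ and the identification of the $G$-invariant bilinear form on it.
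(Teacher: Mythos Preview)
Your first (Sylow-restriction) route is exactly the paper's argument, and it is the one that works; your second, ``structural'' route is circular, since the existence of the $G$-crossed extension for a given $L$ is precisely what $\beta(L,L)=0$ encodes.

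The paper's execution of the Sylow step is sharper than your sketch, and worth noting. After restricting to $P=\langle g\rangle\cong\Z/p\Z$, the paper uses the periodic resolution for cyclic groups to get $H^2(P,V)=\ker(1-g)/\operatorname{Im}(N)$ with $N=1+g+\cdots+g^{p-1}=(1-g)^{p-1}$; since $\dim V=3$ and $p-1\ge 4$, the norm $N$ vanishes identically, so $H^2(P,V)=V^g$ is just the fixed line. The decisive observation is then that $V^g$ is \emph{isotropic} for $Q$ (an easy consequence of $g$ being a nontrivial unipotent in $O(3)$: if $(1-g)e=0$ and $(1-g)f=e$, then $B(e,e)=B(ge,gf)-B(e,f)=B(e,f+e)-B(e,f)=B(e,e)$ gives nothing, but $B(e,e)=B((1-g)f,(1-g)f')$ with $(1-g)f'=e$ forces $B(e,e)=0$ via the orthogonality relation). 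Hence the composite $H^2(P,V^g)\xrightarrow{\sim}H^2(P,V)\to H^4(P,\F_p)$ is zero because $B|_{V^g}=0$. Your phrase ``the invariant form pairs complementary Jordan blocks'' is not quite the right picture here: $\mathrm{Res}_P V$ is a \emph{single} Jordan block of size $3$, and the point is that the bilinear form vanishes on the socle $V^g$, not that it pairs two different blocks. For the computation $H^2(\Omega(3,p),V)=\F_p$ the paper cites Sah \cite{SAH1974255} rather than K\"usefo\u glu, but your transfer up to $O_2(3,p)$ via the extension by $\Z/2\Z$ is the same.
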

\begin{proof}
First we need the well known fact about the cohomology of finite cyclic groups.
\begin{sublemma}\label{sublemma}
Let $C_m$ be the cyclic group of order $m$ generated by $\sigma$ and $A$ a $C_m$-module. Then \begin{align}\label{eq:coho_cyclic} H^n(C_m, A)= \begin{cases}\ker(N)/\operatorname{Im}(1-\sigma), \qquad &\text{if } n=1, 3, 5, \ldots \\
\Ker(1-\sigma)/\operatorname{Im}(N), \quad  &\text{ if } n = 2, 4, 6, \ldots.
\end{cases}
\end{align}where  $N= 1+ \sigma + \sigma^2 +\cdots +\sigma ^{m-1}$
\end{sublemma}
\begin{proof}[Proof of Sublemma \ref{sublemma}]
Let $C_m$ be the cyclic group of order $m$ generated by $\sigma$ and $A$ a $C_m$-module. The sequence
$$\begin{diagram}\node{\cdots}\arrow{e,t}{N} \node{\mathbb Z
C_m}\arrow{e,t}{\sigma -1}\node{\mathbb Z
C_m}\arrow{e,t}{N}\node{\mathbb Z C_m}\arrow{e,t}{\sigma
-1}\node{\mathbb Z C_m}\arrow{e}\node{\mathbb Z}\end{diagram}$$
where  $N= 1+ \sigma + \sigma^2 +\cdots +\sigma ^{m-1}$ is a free resolution of $\mathbb Z$ as $C_m$-module. Using that $H^n(C_m,A)= \operatorname{Ext}_{\mathbb{Z}[C_m]}(\Z, A)$, we prove the sublemma.
\end{proof}
Let $\Omega(3,p)$ be the derived subgroup of $O(3,p)$.  It follows \cite{SAH1974255} that $H^2(\Omega(3,p),V)=0$ for $p=3$, and $H^2(\Omega(3,p),V)=\F_p$ for $p>3$, where $V=\F_p^3$ is the standard representation. Using the exact sequence 
\[1\to \Omega(3,p)\to O_2(3,p)\to \Z/2\Z\to 1,\]we can see that $H^2(O_2(3,p),V)=H^2(\Omega(3,p),V)=\F_p$.

Let $G=O_2(3,p)$. It suffices to show that the cup product combined with inner product in $V$ defines the zero map 
\[H^2(G,V)\otimes H^2(G,V)\to H^4(G,\F_p).\]

The $p$-Sylow subgroup of $G$ is isomorphic to $\Z/p\Z$, and we have $H^2(\Z/p\Z,\F_p)=\F_p$. Moreover, it is well known that the restriction morphism $H^*(G,V)\to H^*(S,V)$ is injective if $S$ is $p$-Sylow in $G$ and $V$ any $G$-module of exponent $p$ (see  \cite[Corollary 4.2.11]{gille_szamuely_2006}). Hence the restriction map $H^2(G,V)\to H^2(\Z/p\Z,V)$ is an isomorphism, and it suffices to show that the natural bilinear map 
$H^2(\Z/p\Z,V)\otimes H^2(\Z/p\Z,V)\to H^4(\Z/p\Z,\F_p)$
 is zero. 

Let $g\in O_2(3,p)$ be an element of order $p$. Since $(\id_V-g)^p=0$,  $\id_V-g$ is nilpotent. Hence $\id_V+g+\cdots +g^{p-1}=(\id_V-g)^{p-1}=0$,  since $p-1>3$. It follows from \eqref{eq:coho_cyclic} that  $H^2(G,V)=V^g=\F_p$ (the trivial module). Thus the natural map $H^2(\Z/p\Z,V^g)\to H^2(\Z/p\Z,V)$ is an isomorphism. But the inner product on $V$ vanishes when restricted to $V^g$. This implies the statement. 
\end{proof}
\begin{theorem}
Any irreducible reflection group over $\F_p$ with $p\geq 3$ gives rise to a unique reflection category up to twisting, except possibly for the cases indicated in the Table \ref{tab:H^2 neq 0}. If $G=O_2(3,p)$, the set of reflection fusion categories up to twisting is a non-empty torsor over $\F_p$.

\begin{table}[!htbp]

\begin{tabular}{ |p{2cm}||p{3cm}|p{2cm} |p{2cm}|p{2cm}| }
 \hline
 $G$ & $O_2(3,p)$, $p>3$ & $O_2(5,3)$ & $O_2(7,3)$ & $O_2^{-}(8,3)$\\
 \hline
  $H^2(G,V)$ & $\F_p$ & $\F_3$ & $\F_3$ & $\F_3^2$\\
  \hline
\end{tabular}
\caption {Irreducible reflection groups over $\Fp$ with non-trivial $H^2(G,V)$.} \label{tab:H^2 neq 0}
\end{table}

\begin{table}[!htbp]

\begin{tabular}{ |p{2.7cm}|p{2cm}|p{6cm}|  }
 \hline
 $G$ & $\operatorname{dim}_{\F_p}(V)$ &  Restriction on $p>2$ or $n$ \\
 \hline
 $A_{n,p}$  &  $n>0 $   & $p\not{|} \ n+1$ \\
 \hline
   $B_{n,p}$  &  $n>2$   & \\
 \hline
   $D_{n,p}$  &  $n>3$   & \\
 \hline
    $E_{6,p}$  &  $6$   & $p\geq 5$\\
 \hline
   $E_{7,p}$  &  $7$   &  \\
 \hline
    $E_{8,p}$  &  $8$   & \\
 \hline
    $F_{4,p}$  &  $4$   & \\
     \hline   $I_{2,p}(d)$  &  $2$   &  $d| p-1  $ if $I_{2,p}(d)\subset O^{+}(2,p)$, and $d|p+1$  if $I_{2,p}(d)\subset O^{-}(2,p)$. \\
 \hline
     $H_{3,p}^\zeta$    &  $3$   & $p>5$, $p^2-1\equiv 0 \mod 5$ \\
 \hline    $H_{4,p}^\zeta$  &  $4$   &  $p>5$, $p^2-1\equiv 0 \mod 5$ \\
  \hline
\hfill \break $\overline{A}_{n,p}$ &  $n\geq 3$   &  $p | n+1$ \\ %[1ex]
 \hline
   $O^{\pm}(2n,p)$  &  $2n$   &   \\
 \hline
$O(2n+1,p)$  &  $2n+1$   &   \\

 \hline
$O(2n+1,p)$  &  $2n+1$   &   \\
 \hline
    $O_{i}^{\pm}(2n,p)$, \newline ($i=1,2$)  &  $2n$   &   If $p=3$ then $n\neq 8$. \\
 \hline
    $O^{\pm}_i(2n+1,p)$, \newline ($i=1,2$)  &  $2n+1$   &   If $i=2$ and $p=3$, then $n\neq 5, 7$\\
 \hline
\end{tabular}

\caption {Irreducible reflection fusion categories over $\Fp$ up to twisting by elements in $H^3(G,\ku^\times)$.} \label{tab:irr reflec fusion cat}
\end{table}
\end{theorem}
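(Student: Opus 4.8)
The plan is to run the obstruction theory of \cite[Sections 7, 8]{ENO3} for $G$-crossed extensions of $\cB_e=\Vc_{(A,Q)}$, taking as the fixed $\pi_0$-datum the inclusion $\rho\colon G\hookrightarrow O(A,Q)=\Pic{\Vc_{(A,Q)}}$ of an irreducible orthogonal reflection group, which by Theorem \ref{Thm:Def reflec cat} is exactly the data attached to a reflection fusion category with trivial component $\Vc_{(A,Q)}$. First I would recall the shape of the obstructions: when $O_3(\rho)\in H^3(G,A)$ vanishes, the monoidal liftings $\widetilde\rho$ of $\rho$ form a torsor over $H^2(G,A)$; once a lifting $\widetilde\rho$ with vanishing $O_4(\widetilde\rho)\in H^4(G,\ku^\times)$ is fixed, the $G$-crossed extensions inducing it form a torsor over $H^3(G,\ku^\times)$, hence a single twist-equivalence class; and the twist action of $H^3(G,\ku^\times)$ does not change $\widetilde\rho$, so distinct liftings give twist-inequivalent extensions. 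Then I would invoke Theorem \ref{main2} of the Appendix: $O_3(\rho)=0$ for every $\rho$, the canonical lifting corresponding to $0\in H^2(G,A)$ satisfies $O_4=0$, and moving along the torsor by $L\in H^2(G,A)$ replaces the obstruction by $O_4(\widetilde\rho_L)=\beta(L,L)$, where $\beta(L,L)$ denotes the image of the cup square $L\cup L$ under the map $H^4(G,A\otimes A)\to H^4(G,\ku^\times)$ induced by the bicharacter associated with $Q$. The conclusion is that the twist-equivalence classes of irreducible reflection fusion categories with trivial component $\Vc_{(A,Q)}$ and reflection group $G$ are in bijection with the subset $\{L\in H^2(G,A)\colon \beta(L,L)\text{ is trivial}\}$ of the $H^2(G,A)$-torsor of liftings; and I would note that any such extension is automatically irreducible as a $G$-crossed category, since a $G$-invariant tensor subcategory of the pointed category $\Vc_{(A,Q)}$ is a $G$-submodule of $V=A$, and $V$ is $G$-irreducible.

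Next I would feed in Lemma \ref{Lemma:H^2 =0}. For every irreducible orthogonal reflection group $G\subset O(A,Q)$ over $\F_p$ other than $O_2(3,p)$ with $p>3$, $O_2(5,3)$, $O_2(7,3)$ and $O_2^{-}(8,3)$, one has $H^2(G,A)=0$, so there is exactly one reflection fusion category up to twisting, recovering and extending Corollary \ref{cor:non-modular case}. For $G=O_2(3,p)$ with $p>3$ I would apply Lemma \ref{lemma: cup product}: there $H^2(G,A)=\F_p$ and the squaring map $L\mapsto\beta(L,L)$ vanishes identically, so every lifting is unobstructed and, since distinct liftings are twist-inequivalent, the twist-equivalence classes form a non-empty torsor over $H^2(G,A)=\F_p$. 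The three groups $O_2(5,3)$, $O_2(7,3)$ and $O_2^{-}(8,3)$ have $H^2(G,A)$ of orders $3$, $3$ and $9$ respectively by Lemma \ref{Lemma:H^2 =0}, but I would not try to evaluate the squaring map on them, so they remain the possible exceptions recorded in Table \ref{tab:H^2 neq 0}.

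To produce Table \ref{tab:irr reflec fusion cat} I would run through the classification of irreducible orthogonal reflection groups in Theorem \ref{Th:classification irr othogonal reflection groups}. The reductions mod $p$ of irreducible Coxeter groups and the groups $\overline{A}_{n,p}$ all have $H^2(G,A)=0$ and supply the Coxeter-type rows, each appearing together with its unique non-isomorphic twisted $\F_p$-form by Proposition \ref{lemma:descent} and the Remark following it; the groups of orthogonal type supply the rows $O^{\pm}(2n,p)$, $O(2n+1,p)$, $O_i^{\pm}(2n,p)$ and $O_i^{\pm}(2n+1,p)$, the stated $p=3$ restrictions being precisely the removal of $O_2^{-}(8,3)$ and of $O_2(5,3)$, $O_2(7,3)$, while $O_2(3,p)$ is retained (with the $\F_p$-torsor caveat when $p>3$).

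I expect the principal difficulty to be the passage through the lifting torsor for $O_2(3,p)$, which requires at the same time that $H^2(G,A)$ be nonzero and that the induced squaring map $H^2(G,A)\to H^4(G,\ku^\times)$ vanish; this is precisely the content of Lemma \ref{lemma: cup product}, whose proof is the delicate step and hinges on the facts that a $p$-Sylow subgroup of $O_2(3,p)$ is cyclic of order $p$, that restriction $H^*(G,V)\hookrightarrow H^*(\Z/p\Z,V)$ is injective, and that $V$ restricts to the trivial $\Z/p\Z$-module, on which the quadratic form $Q$ vanishes identically. The analogous evaluation of the squaring map for $O_2(5,3)$, $O_2(7,3)$ and $O_2^{-}(8,3)$ I would leave open, as the theorem does.
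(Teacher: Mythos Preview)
Your proof is correct and follows essentially the same route as the paper's: invoke Theorem \ref{main2} to reduce the classification to the set $\{L\in H^2(G,A):\beta(L,L)=0\}$, apply Lemma \ref{Lemma:H^2 =0} to dispose of all but the four exceptional groups, and use Lemma \ref{lemma: cup product} to handle $O_2(3,p)$. One small slip in your closing summary of Lemma \ref{lemma: cup product}: $V$ does \emph{not} restrict to the trivial $\Z/p\Z$-module; rather, the periodic resolution gives $H^2(\Z/p\Z,V)\cong V^g$, the one-dimensional fixed line, and it is the fact that this line is isotropic (so the inner product vanishes on it) that kills the squaring map.
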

\begin{proof}
It follows from Theorem \ref{Thm:Def reflec cat} that  $G\subset O(V,Q)$ is an irreducible orthogonal reflection group. Hence $G$ is one of the groups of Theorem \ref{Th:classification irr othogonal reflection groups} or a Tambara-Yamagami fusion category. 

By Lemma \ref{Lemma:H^2 =0} and Lemma \ref{lemma: cup product}, we have that $H^2(G,V)=0$, except for $O_{2}(3,p)$ ($p>3$), where $H^2(O_{i}(V,Q),V)=\F_p$. It follows from Corollary \ref{cor:non-modular case} that in all cases where $H^2(G,V)=0$, $\cB$ is unique up to twisting by elements in $H^3(G,\ku^\times).$

Let $G=O_2(3,p)$. By Lemma \ref{lemma: cup product}, for any $M\in H^2(G,V)=\F_p$ the fourth obstruction $O_4(M)$ is zero. Hence the set  of twisting classes of reflection categories $\cB=\oplus_{g\in G}\cB_g$ over $\Vc_{(V,Q)}$, is a non-empty torsor over $\F_p$.
\end{proof}

\begin{remark} The condition that ${\rm Inv}(\cB)={\rm Inv}(\cB_e)$ in the definition of a reflection category 
can be relaxed without significantly changing the theory. Namely, let $\cB$ be a crossed $G$-category with 
$\cB_e$ pointed and ${\rm Inv}(\cB_e)=V$ an elementary abelian $p$-group. Let us call $\cB$ a {\it generalized reflection category} if it is tensor generated by objects of Frobenius-Perron dimension $1$ and $\sqrt{p}$. Then the theory of generalized reflection categories is the same as that of reflection categories, except that the corresponding map 
$G\to {\rm Pic}(\cB)=O(V,Q)$ does not have to be injective, so the corresponding reflection group is not $G$ but the image $\overline{G}$ of $G$ in $O(V,Q)$. Of course, in this situation we can easily have a nontrivial squaring map $\beta: H^2(G,V)\to H^4(G,\ku^\times)$, so the classification of irreducible generalized reflection categories is somewhat less explicit than that of reflection categories: up to twisting, they are parametrized by $L\in H^2(G,V)$ such that $\beta(L,L)=1$. 
\end{remark}

\section{ Appendix: Classification of extensions of pointed fusion categories associated to elementary abelian $p$-groups, $p>2$. \\ By Pavel Etingof}\label{Appendix:Etingof}

\subsection{Introduction and the main results}

Let $\ku$ be an algebraically closed field of characteristic zero, $A$ be a finite abelian group, 
$A^*:=\Hom(A,\ku^\times)$, and $\Vec_A$ be the fusion category of $A$-graded $\ku$-vector spaces. Recall that by \cite[Corollary 1.2]{ENO}, the Brauer-Picard group ${\rm BrPic}(\Vec_A)$ is naturally isomorphic to the orthogonal group $O(A\oplus A^*,q)$ of automorphisms of $A\oplus A^*$ preserving the quadratic form $q(a,f)=f(a)$. 

Assume now that $A$ has odd order. Let $G$ be a group (not necessarily finite) and $c: G\to O(A\oplus A^*,q)$ be a homomorphism. Then $q$ defines a unique $G$-invariant symmetric bicharacter $\beta$ on $A\oplus A^*$ valued in roots of unity of odd order such that 
\begin{equation}\label{form1}
\beta(x,y)^2=q(x+y)/q(x)q(y);
\end{equation}
namely, $\beta((a_1,f_1),(a_2,f_2))=(f_1(a_2)f_2(a_1))^{1/2}$. Hence $c$ defines a canonical action of $G$ on the braided tensor category ${\mathcal Z}(\Vec_A)=(\Vec_{A\oplus A^*},\beta)$ (the Drinfeld center of $\Vec_A$). In other words, the first obstruction $O_3(c)\in H^3(G,A\oplus A^*)$ to constructing an extension of $\Vec_A$ by $G$ automatically vanishes, and there is a canonical choice of the element $M$ in the corresponding $H^2(G,A\oplus A^*)$-torsor to go to the next step of the process (see \cite[Sections 7,8]{ENO}). This also follows from the fact that $H^i(O(A\oplus A^*,q),A\oplus A^*)=0$ for $i>0$, since $-{\rm Id}\in O(A\oplus A^*)$ acts by $-1$ on $A\oplus A^*$.  

Recall also that if $\cC$ is a faithful $G$-graded tensor category over $\ku$ and $\omega\in H^3(G,\ku^\times)$ then one can define a faithful $G$-graded tensor category $\cC^\omega$ obtained by twisting $\cC$ by $\omega$. This is done by multiplying 
the associativity morphism $\alpha_{X_1,X_2,X_3}$ by $\widetilde\omega(g_1,g_2,g_3)$ for any $g_1,g_2,g_3\in G$ and $X_i\in \cC_{g_i}$, $i=1,2,3$, where $\widetilde\omega$ is a 3-cocycle representing $\omega$. Note that $\cC^\omega$ does not depend on the choice of $\widetilde{\omega}$ and uniquely determines $\omega$. 

Our first main result is the following theorem. 

\begin{theorem}\label{main1} Let $p$ be an odd prime and $A$ be an elementary abelian $p$-group (i.e., a vector space over $\F_p$). Then 

(i) the second obstruction $O_4(c,M)\in H^4(G,\ku^\times)$ vanishes. In other words, there is a canonical $G$-extension $\mathcal{C}$ of $\Vec_A$ induced by $c$, defined up to twisting by an element $\omega\in H^3(G,\ku^\times)$;

(ii) $G$-extensions of $\Vec_A$, up to twisting by $\omega\in H^3(G,\ku^\times)$, are in natural bijection with elements 
$L\in H^2(G,A\oplus A^*)$ such that $\beta(L,L)\in H^4(G,\ku^\times)$ vanishes.  
\end{theorem}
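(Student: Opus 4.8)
\textbf{Proof proposal for Theorem \ref{main1}.}

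The plan is to realize the canonical $G$-action on $\mathcal Z(\Vec_A)=(\Vec_{A\oplus A^*},\beta)$ very explicitly, using the fact that $A$ is a vector space over $\F_p$ with $p$ odd, and then to show that the obstruction cocycle representing $O_4(c,M)$ is a coboundary by writing down an explicit trivialization. First I would reduce the problem to a statement about the symmetric bicharacter: since $p$ is odd, the square root in \eqref{form1} is unambiguous, so $\beta$ on $H:=A\oplus A^*$ is the unique $G$-invariant symmetric bicharacter with $\beta(x,y)^2=q(x+y)/q(x)q(y)$, and moreover $\beta$ itself comes from a quadratic form: writing $\beta(x,y)=\zeta^{\langle x,y\rangle}$ for a fixed primitive $p$-th root of unity $\zeta$ and a $G$-invariant symmetric $\F_p$-bilinear form $\langle\,,\,\rangle$ on $H$, the canonical choice of $M\in H^2(G,H)$ is the zero class (because $H^i(O(H,q),H)=0$ for $i>0$, as $-\Id$ acts by $-1$, so the torsor has a canonical basepoint which is $G$-equivariantly split). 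Then the second obstruction $O_4(c,0)$ is computed from the pentagon-type coherence of the associativity constraint of the $G$-crossed extension built from the trivial splitting, and it is expressed through the bicharacter $\beta$ evaluated on a pair of $2$-cochains — concretely it has the shape of $\zeta$ raised to a cup-product-like expression in the ``linking'' data. The heart of (i) is to show this $4$-cochain is a coboundary.

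For (i) I would use the standard obstruction-theoretic description from \cite[Sections 7,8]{ENO3} (equivalently \cite[Sections 7,8]{ENO}): having fixed $\rho=c$ and the lift $\widetilde\rho$ corresponding to $M=0$, the obstruction $O_4$ lives in $H^4(G,\ku^\times)$ and, because everything in sight is built from the bicharacter $\beta$, it equals the image under $\beta\colon H^4(G,H\otimes_{\Z}H)\to H^4(G,\ku^\times)$ of a class that is manifestly a sum of terms of the form $a\smile \delta b$ or involves the Bockstein — in fact, using that $\beta$ is symmetric and $\zeta$ is a primitive $p$-th root of unity with $p$ odd, the obstruction reduces to $\zeta$ raised to a multiple of a symmetric self-cup $b\smile b$ for a $2$-cochain $b$, and $\tfrac12$ is available in $\F_p$. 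The key point is then: for $p$ odd and $\beta$ a symmetric bicharacter, the ``quadratic'' obstruction term $\beta(b,b)$ trivializes because one can extract a square root — precisely, set $\widetilde\omega:=\beta(b, \text{half-diagonal of } b)$ and check $\delta\widetilde\omega$ cancels the obstruction cocycle; this is exactly the odd-prime miracle that fails for $p=2$. I would carry out this trivialization directly with the Eilenberg–MacLane formula for the cup product and the formulas for $\gamma_{g,h}$, $\mu_g$ recalled before Proposition 2.something, so that $\delta$ of the explicit $3$-cochain $\widetilde\omega$ built from $\beta$ and the $2$-cochain data equals the obstruction $4$-cocycle. The main obstacle — and the only genuinely technical part — is precisely this: writing down the correct explicit $3$-cochain whose coboundary is $O_4(c,0)$, i.e. verifying the cancellation identity among the eight or so $\beta$-terms coming from $\delta$ applied to a cup-product expression; all the bookkeeping with $\can$, $\psi$, $\phi$ feeds into this single cocycle identity.

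For (ii), once (i) gives a canonical basepoint extension $\mathcal C$ for the lift $M=0$, I would invoke the two torsor statements recalled in the text: the set of monoidal lifts $\widetilde\rho$ of $\rho=c$ is a torsor over $H^2(G,H)$ (nonempty by the automatic vanishing of $O_3(c)$ noted before \eqref{form1}), and for each such lift the set of extensions lifting it is, when $O_4$ vanishes, a torsor over $H^3(G,\ku^\times)$. So the set of all $G$-extensions of $\Vec_A$ modulo twisting by $H^3(G,\ku^\times)$ is in bijection with the set of lifts $\widetilde\rho$ that have vanishing $O_4$. Shifting the basepoint lift from $M=0$ by a class $L\in H^2(G,H)$ changes the associativity constraint by the cocycle that represents, after applying $\beta$, exactly $\beta(L,L)\in H^4(G,\ku^\times)$ (together with a linear-in-$L$ term that is a coboundary by the same odd-$p$ argument as in (i), or vanishes because $O_4(c,0)=0$ and $O_4$ is quadratic-affine in $M$ with the linear part controlled by the canonically split $M=0$ case). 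Hence the lift corresponding to $L$ admits an extension if and only if $\beta(L,L)=0$ in $H^4(G,\ku^\times)$, which is the asserted bijection. The remaining check here is the affine-quadratic dependence of $O_4$ on $M$ — that $O_4(c,M)=O_4(c,0)+(\text{coboundary or zero linear term})+\beta(M,M)$ — which again follows by expanding the definitions of $\gamma_{g,h}$ and $\mu_g$ and using $p$ odd; I do not expect surprises there beyond careful bookkeeping.
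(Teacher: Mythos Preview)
Your approach is genuinely different from the paper's, and the paper's own text flags this: the authors write that they ``expect that these theorems can also be proved by directly computing the obstruction $O_4$ without using the results of \cite{FP}'', but they do \emph{not} do this. Instead, the paper's proof of (i) is a stabilization argument: embed $A=\F_p^m$ into $\F_p^n$ for large $n$, observe that $O_4(c,M)$ vanishes if and only if $O_4(c_n,M_n)$ does (since tensoring a categorification of the fusion ring $R$ with $\Vec_{\F_p^{n-m}}$ goes both ways), and then use that $O_4(c_n,M_n)$ is pulled back from $H^4(O(2n,p),\ku_p^\times)$, which vanishes for $n$ large by Fiedorowicz--Priddy. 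Part (ii) is then deduced from \cite[Propositions 8.13 and 8.15]{ENO}, which is essentially what you do for (ii) as well.

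The gap in your proposal is in part (i). You assert that $O_4(c,0)$ ``reduces to $\zeta$ raised to a multiple of a symmetric self-cup $b\smile b$ for a $2$-cochain $b$'', but you never say what $b$ is, and for the canonical lift $M=0$ there is no evident $2$-cochain to play this role. You appear to be conflating two distinct things: the \emph{dependence} of $O_4(c,M)$ on $M$, which by \cite[Proposition 8.15]{ENO} is indeed quadratic with top term $\beta(M,M)$ (and this is what makes your argument for (ii) work), versus the \emph{base value} $O_4(c,0)$ itself, which is the pullback along $c$ of a universal class in $H^4(O(A\oplus A^*,q),\ku^\times)$ and has no reason a priori to be a cup-square of anything. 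Your ``odd-$p$ square-root trick'' trivializes classes of the form $\beta(L,L)$ at the cochain level, but it does not touch $O_4(c,0)$ unless you first exhibit that class in such a form --- and this is exactly the step you label ``the only genuinely technical part'' without carrying it out. The paper sidesteps this entirely by killing the universal class via stable vanishing, which is why the Fiedorowicz--Priddy input is essential to their argument.
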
 

Here $\beta(L,L)$ denotes the composition of the bicharacter $\beta$ with the cup product in the cohomology of $G$. 

The second main result concerns  $G$-crossed categories. Namely, let $(B,q)$ be a finite abelian group of odd order with a non-degenerate quadratic form $q$, and let 
$\Vec_{(B,q)}$ be the corresponding pointed braided fusion category over $\ku$ via the Joyal-Street theorem (see \cite[Subsection 8.4]{EGNO}). 
Then, similarly to the above, we have a canonical bicharacter $\beta$ on $B$ satisfying \eqref{form1}. Recall (see e.g. \cite{DN}, Theorem 4.3 or Proposition 5.14, Example 5.16) that ${\rm Pic}(\Vec_{(B,q)})=O(B,q)$, the orthogonal group of $(B,q)$.
Thus, for any group $G$ and a homomorphism $c: G\to O(B,q)$ we have a canonical action of $G$ on the braided category $\Vec_{(B,q)}$. That is, the obstruction $O_3(c)\in H^3(G,B)$ to constructing a $G$-crossed extension of $B$ vanishes, and there is a canonical choice of $M$ in the corresponding torsor over $H^2(G,B)$ (see \cite{ENO}, Subsections 7.6, 7.8). 
The second obstruction $O_4(c,M)$ to constructing a $G$-crossed extension is then an element of $H^4(G,\ku^\times)$
(see \cite{ENO}, Subsection 7.8). 

Recall also that similarly to usual $G$-graded extensions, $G$-crossed extensions of $\Vec_{B,q}$ form a torsor over $H^3(G,\ku^\times)$, i.e., can be twisted an element $\omega$ of this group, $\mathcal{B}\mapsto \mathcal{B}^\omega$. 

Our second main result is the following theorem. 

\begin{theorem}\label{main2} Let $p$ be an odd prime and $B$ be an elementary abelian $p$-group. Then 

(i) the second obstruction $O_4(c,M)\in H^4(G,\ku^\times)$ vanishes. In other words, there is a canonical braided $G$-crossed extension $\mathcal{B}$ of $\Vec_{(B,q)}$ induced by $c$, defined up to twisting by an element $\omega\in H^3(G,\ku^\times)$;

(ii)  $G$-crossed extensions of $\Vec_{(B,q)}$, up to twisting by $\omega\in H^3(G,\ku^\times)$, are in natural bijection with elements 
$L\in H^2(G,B)$ such that $\beta(L,L)\in H^4(G,\ku^\times)$ vanishes.

\end{theorem}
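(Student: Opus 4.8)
The plan is to reduce Theorem \ref{main2} to the structural input provided by \cite{ENO3}, Sections 7 and 8, together with the vanishing statement in Theorem \ref{main1}. Recall that by \cite[Theorem 7.12]{ENO3}, $G$-crossed extensions of $\Vec_{(B,q)}$ correspond to homotopy classes of maps $BG \to B\uuPic{\Vec_{(B,q)}}$, and the obstruction-theoretic analysis of \cite[Section 8]{ENO3} organizes these in stages: a homomorphism $c\colon G \to \Pic{\Vec_{(B,q)}} = O(B,q)$, then a lift through the categorical group $\uPic{\Vec_{(B,q)}}$ controlled by $O_3(c) \in H^3(G,B)$ and $H^2(G,B)$, then a further lift through $\uuPic{\Vec_{(B,q)}}$ controlled by $O_4 \in H^4(G,\ku^\times)$ and $H^3(G,\ku^\times)$. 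So the heart of the matter is: (a) $O_3(c)$ vanishes canonically, (b) $O_4$ vanishes for the canonical choice of $M$, and (c) compute the torsor structure.

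First I would recall from the discussion preceding the theorem that $c$ already determines a canonical action of $G$ on the braided category $\Vec_{(B,q)}$ — this is exactly the statement that $O_3(c) \in H^3(G,B)$ vanishes and, moreover, that there is a \emph{canonical} element $M_0$ in the resulting $H^2(G,B)$-torsor (coming from $q$ itself via the bicharacter $\beta$). For part (i), I would invoke Theorem \ref{main1}: the key observation is that a braided $G$-crossed extension of $\Vec_{(B,q)}$ is the same data as a plain $G$-extension of the pointed fusion category $\Vec_A$ where we take $A$ so that $\Vec_{(B,q)} = \mathcal{Z}(\Vec_A)$-style input is replaced by the direct statement — more precisely, I would argue that the obstruction $O_4(c,M_0)$ for the $G$-crossed problem is identified with (or is the image of) the corresponding $O_4$ for an associated ordinary extension problem, which vanishes by Theorem \ref{main1}(i). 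Since $\beta(M_0, M_0)$ is pinned down by $q$ and the canonical choices, the computation reduces to showing this particular cup-product class vanishes in $H^4(G,\ku^\times)$, which is precisely what the appendix's main computation (for $\Vec_A$) delivers. Hence a canonical $G$-crossed extension exists, unique up to twisting by $\omega \in H^3(G,\ku^\times)$.

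For part (ii), once $O_4(c,M_0) = 0$ is established, the general machinery of \cite[Section 8]{ENO3} says the set of lifts of the fixed $\widetilde{\rho}$ (determined by $c$ and $M$) to $\uuPic{\Vec_{(B,q)}}$ is a torsor over $H^3(G,\ku^\times)$, and the choices of the intermediate lift $\widetilde{\rho}$ — i.e., of $M$ — form a torsor over $H^2(G,B)$ \emph{subject to} the constraint that the next obstruction $O_4(c,M)$ vanishes. The remaining point is to identify how $O_4(c,M)$ depends on $M$: the standard formula gives $O_4(c,M) = O_4(c,M_0) \cdot \beta(L,L)$ where $L = M - M_0 \in H^2(G,B)$ and $\beta(L,L)$ is the cup product of $L$ with itself followed by $\beta\colon B \otimes B \to \ku^\times$. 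Combined with part (i), $O_4(c,M) = \beta(L,L)$, so the $M$ giving rise to a $G$-crossed extension are exactly those with $\beta(L,L) = 0$ in $H^4(G,\ku^\times)$, and modding out the twisting $H^3(G,\ku^\times)$-action produces the claimed bijection with $\{L \in H^2(G,B) : \beta(L,L) = 0\}$.

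The main obstacle I expect is step (i): genuinely identifying $O_4(c,M_0)$ for the $G$-\emph{crossed} extension problem with the $O_4$ controlled by Theorem \ref{main1}, rather than just asserting an analogy. This requires either (a) a direct relation between crossed extensions of $\Vec_{(B,q)}$ and ordinary extensions of a suitable pointed category — for instance via the fact that $\Vec_{(B,q)}$ embeds as a Lagrangian-type subcategory of $\mathcal{Z}(\Vec_A)$ for an appropriate $A$, so that an $O(B,q)$-action extends to an $O(A\oplus A^*,q)$-action and crossed extensions of the former pull back from extensions of $\Vec_A$ — or (b) re-running the appendix's cohomological computation (the vanishing of the relevant $H^4$ class, using that $-\Id$ acts by $-1$ and $p$ is odd) directly in the crossed setting. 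I would pursue route (a), since it lets me cite Theorem \ref{main1} essentially verbatim; the bookkeeping of matching the canonical $M$'s on both sides and checking the cup-product formula for $O_4(c,M)$ in $M$ is the part that needs care but is ultimately a formal consequence of the definitions in \cite[Section 8]{ENO3}.
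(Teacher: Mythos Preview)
Your overall strategy matches the paper's: reduce part (i) to Theorem \ref{main1} and derive (ii) from the dependence $O_4(c,M)=O_4(c,M_0)\cdot\beta(L,L)$ (the paper cites \cite[Propositions 8.13 and 8.15]{ENO} for this). The weak point is exactly where you flag it, in the mechanism of route (a). The Lagrangian-embedding picture you sketch is both terminologically off --- $(B,q)$ is non-degenerate, not isotropic, as a subspace of any $(A\oplus A^*,\text{hyp})$ --- and does not actually produce crossed extensions of $\Vec_{(B,q)}$ from ordinary extensions of $\Vec_A$: the passage through the Drinfeld center yields $G$-crossed extensions of $\mathcal{Z}(\Vec_A)=\Vec_{(A\oplus A^*,\text{hyp})}$, not of the smaller category $\Vec_{(B,q)}$, and there is no general descent from the former to the latter.

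The paper's reduction is simpler and sidesteps this entirely: take $A=B$ and just forget the braiding. A $G$-crossed extension of $\Vec_{(B,q)}$ is in particular an ordinary $G$-extension of the underlying fusion category $\Vec_B$, via the morphism of $3$-groups $\uuPic{\Vec_{(B,q)}}\to\underline{\underline{\BrPic}}(\Vec_B)$ that regards an invertible one-sided module as a bimodule through the braiding. Both $3$-groups have $\pi_2=\ku^\times$ and this map is the identity there, so the class $O_4(c,M)\in H^4(G,\ku^\times)$ for the crossed problem \emph{equals} the $O_4$ for the ordinary $G$-extension problem of $\Vec_B$ (with the induced map $G\to O(B,q)\hookrightarrow O(B\oplus B^*)$ and the compatible canonical $M$). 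Theorem \ref{main1}(i) now applies directly, and your argument for (ii) goes through unchanged.
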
 

Note that Theorem \ref{main1} is a special case of Theorem \ref{main2}, since the problem of constructing a $G$-extention of $\Vec_A$ is equivalent to the problem of constructing  
a  $G$-crossed extension of the Drinfeld center of $\Vec_A$.\footnote{I am grateful to D. Nikshych for this remark.} 

The proof of Theorems \ref{main1}, \ref{main2} is based on the result of Fiedorowicz and Priddy \cite{FP} on the vanishing of the stable cohomology of orthogonal groups over a finite field with coefficients in a field of the same characteristic. 
However, we expect that these theorems can also be proved by directly computing the obstruction $O_4$ without using the results of \cite{FP}, and that they actually hold for any finite abelian group $A$ of odd order. 

{\bf Acknowledgements.} I am grateful to D. Nikshych and V. Ostrik for useful discussions. This work was supported by the NSF grant DMS-1502244.

\subsection{Proofs of Theorems \ref{main1} and \ref{main2}}

\subsubsection{Proof of Theorem \ref{main1}}

 Let $O(2n,p)$ be the split orthogonal group over $\F_p$ in dimension $n$. Also let $\ku^\times_p$ be the group of elements of $\ku^\times$ whose order is a power of $p$. The proof of Theorem \ref{main1} is based on the following proposition, which is an immediate consequence of 
\cite{FP}, Theorem 4.6(d).   

\begin{proposition}\label{mainprop} Let $i>0$. If $p$ is an odd prime then 
there exists $n_0$ such that for $n\ge n_0$ one has $H^i(O(2n,p),\ku^\times_p)=0$. 
\end{proposition}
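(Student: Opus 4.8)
The plan is to reduce the claim to the stable mod-$p$ cohomology of the split orthogonal groups over $\F_p$ and then quote \cite{FP}. First I would identify the coefficient module: since $\ku$ is algebraically closed of characteristic zero it contains all roots of unity, so $\ku^\times_p=\bigcup_{m\ge 1}\mu_{p^m}$ is the quasi-cyclic group $\Z(p^\infty)=\varinjlim_m\Z/p^m\Z$, the transition maps being multiplication by $p$, and the $O(2n,p)$-action is trivial. As $O(2n,p)$ is finite, group cohomology commutes with this filtered colimit of coefficients (e.g. via the bar resolution, where each cochain group is a finite power of the module), so
\[
H^i(O(2n,p),\ku^\times_p)=\varinjlim_m H^i(O(2n,p),\Z/p^m\Z),
\]
and it suffices to find $n_0=n_0(i)$, \emph{independent of $m$}, with $H^i(O(2n,p),\Z/p^m\Z)=0$ for all $n\ge n_0$.

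Next I would strip off the higher $p$-power coefficients by d\'evissage. For $m\ge 2$ the short exact sequence of trivial modules $0\to\Z/p\Z\to\Z/p^m\Z\to\Z/p^{m-1}\Z\to 0$ yields the exact segment
\[
H^i(O(2n,p),\Z/p\Z)\to H^i(O(2n,p),\Z/p^m\Z)\to H^i(O(2n,p),\Z/p^{m-1}\Z),
\]
so if $H^i(O(2n,p),\Z/p\Z)=0$ then, by induction on $m$ starting from $m=1$, $H^i(O(2n,p),\Z/p^m\Z)=0$ for every $m$. Thus the whole statement comes down to showing that $H^i(O(2n,p),\F_p)=0$ for all $n\ge n_0(i)$.

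This last point is the only substantive one, and it is exactly \cite[Theorem 4.6(d)]{FP}: the mod-$p$ homology of the split orthogonal groups over $\F_p$ is concentrated in degree zero in a stable range, so $H_i(O(2n,p),\F_p)=0$ for $i>0$ and $n$ large, and hence, by the universal coefficient theorem over the field $\F_p$, also $H^i(O(2n,p),\F_p)=0$. The main obstacle — indeed the only non-formal step — is the theorem of \cite{FP}; the passage to the colimit, the d\'evissage, homological stability and universal coefficients are all routine. The one thing that genuinely needs care is that the bound $n_0$ be independent of the module $\Z/p^m\Z$, and this is precisely what the d\'evissage of the second paragraph arranges, by reducing every such module to the fixed module $\F_p$.
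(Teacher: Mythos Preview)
Your proposal is correct and takes essentially the same approach as the paper: both reduce to \cite[Theorem~4.6(d)]{FP}. The paper simply declares the proposition an ``immediate consequence'' of that theorem, whereas you spell out the standard reductions (identifying $\ku^\times_p\cong\Z(p^\infty)$, passing to the filtered colimit, d\'evissage to $\F_p$-coefficients, and universal coefficients from homology to cohomology) that justify why the Fiedorowicz--Priddy result about mod-$p$ homology suffices; these details are exactly what one would supply to unpack the word ``immediate''.
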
 

Now we can prove Theorem \ref{main1}(i). Let $A=\F_p^m$, and for $n\ge m$, let $c_n: G\to O(2n,p)$ 
be the composition of $c=c_m$ with the natural inclusion $O(2m,p)\hookrightarrow O(2n,p)$ (induced by replacing $A$ by 
$A\oplus \F_p^{n-m}$). Similarly, let $M_n$ be the canonical choice 
of the element of the torsor over $H^2(G, \F_p^{2n})$ (so $M_m=M$).
This choice determines the fusion ring $R_n$ of the extended category, 
and $R_n=R\otimes \Z[\F_p^{n-m}]$, where $R=R_m$ (with the natural basis of the tensor product). 
Thus $O_4(c,M)$ vanishes if an only if so does $O_4(c_n,M_n)$. Indeed, any categorification 
of $R$ defines a categorification of $R_n$ (by tensoring with $\Vec_{\F_p^{n-m}}$), and 
vice versa. But $O_4(c_n,M_n)$ is pulled back from an element  of $H^4(O(2n,p),\ku^\times)$, which by Theorem 8.16 of \cite{ENO} in fact lies in 
$H^4(O(2n,p),\ku_p^\times)$. Also, by Proposition \ref{mainprop}, the group $H^4(O(2n,p),\ku_p^\times)$ is trivial for sufficiently large $n$. 
This implies that $O_4(c_n,M_n)$ vanishes for sufficiently large $n$, hence $O_4(c,M)$ vanishes, as desired. 

Part (ii) of Theorem \ref{main1} now follows from \cite{ENO}, Proposition 8.13 and Proposition 8.15. 

\subsubsection{Proof of Theorem \ref{main2}}

Theorem \ref{main2} follows from Theorem \ref{main1}. 
Namely, the obstruction $O_4(c,M)$ to constructing a  $G$-crossed extension 
of $\Vec_{(B,q)}$ is the same as the obstruction $O_4(c,M)$ to constructing a usual $G$-extension of $\Vec_B$,
which vanishes by Theorem \ref{main1}(i). This proves Theorem \ref{main2}(i).  

Part (ii) of Theorem \ref{main2} now follows in the same way as part (ii) of Theorem \ref{main1}.

\end{document}